\documentclass{amsart}

\usepackage[utf8]{inputenc}
\usepackage[OT2,T1]{fontenc}
\usepackage[hmargin=1in]{geometry}

\usepackage{amsbsy}
\usepackage{amscd}
\usepackage{amsmath}
\usepackage{amssymb}
\usepackage{commath}
\usepackage{mathrsfs}
\usepackage{mathtools}
\usepackage{stmaryrd}

\usepackage[matrix,arrow]{xy}
\usepackage{booktabs}
\usepackage{caption}
\usepackage{longtable}
\usepackage{tikz-cd}

\usepackage{comment}
\usepackage{enumerate}
\usepackage{lipsum}
\usepackage[normalem]{ulem}

\usepackage[table,dvipsnames]{xcolor}
\usepackage{natbib}
\usepackage{hyperref}
\usepackage[customcolors]{hf-tikz}

\hypersetup{
  colorlinks=true,
  linkcolor=DarkOrchid,
  filecolor=blue,
  citecolor=olive,
  urlcolor=orange,
  pdftitle={p-TSNC Project},
}

\definecolor{Green}{rgb}{0.0, 0.5, 0.0}

\DeclareSymbolFont{cyrletters}{OT2}{wncyr}{m}{n}
\DeclareMathSymbol{\Sha}{\mathalpha}{cyrletters}{"58}

\newtheorem{thmalph}{Theorem}[section]

\DeclareMathOperator{\Gal}{Gal}

\DeclareMathOperator{\Image}{Im}

\DeclareMathOperator{\ord}{ord}

\DeclareMathOperator{\rank}{rank}

\DeclareMathOperator{\Sel}{Sel}

\renewcommand{\C}{\mathbb{C}}

\newcommand{\N}{\mathbb{N}}
\newcommand{\Q}{\mathbb{Q}}

\newcommand{\Qp}{\mathbb{Q}_p}

\newcommand{\Z}{\mathbb{Z}}

\newcommand{\Zp}{\mathbb{Z}_p}

\newcommand{\cK}{\mathcal K}
\newcommand{\cL}{\mathcal{L}}
\newcommand{\cO}{\mathcal O}

\theoremstyle{plain}
\newtheorem*{theorem*}{Theorem}
\newtheorem*{conj*}{Conjecture}
\newtheorem*{Ques*}{Question}
\newtheorem*{cor*}{Corollary}
\newtheorem{theorem}{Theorem}[section]
\newtheorem{lemma}[theorem]{Lemma}
\newtheorem{Proposition}[theorem]{Proposition}

\theoremstyle{definition}
\newtheorem{definition}[theorem]{Definition}

\theoremstyle{hyp}

\theoremstyle{remark}
\newtheorem{remark}[theorem]{Remark}
\newtheorem*{Remark*}{Remark}

\makeatletter
\theoremstyle{plain}
\newtheorem*{intr@thm}{\intr@thmname}

\newtheorem*{c@njecture}{\conjn@name}
\newcommand{\myl@bel}[2]{%
  \protected@write\@auxout{}{%
    \string\newlabel{#1}{{#2}{\thepage}{#2}{#1}{}}%
  }%
  \hypertarget{#1}{}%
}

\makeatother

\begin{document}
\title[]{Iwasawa invariants of Bertolini--Darmon theta elements}

\author[Abhishek]{Abhishek}
\address[Abhishek]{Harish-Chandra Research Institute, Chhatnag Road, Jhunsi, Prayagraj  211 019 India.}
\email{abhi.math04@gmail.com, abhishek@hri.res.in}
\author[Ray]{Jishnu Ray}
\address[Ray]{
	 Harish-Chandra Research Institute, Chhatnag Road, Jhunsi, Prayagraj 211 019, India.}
\address[Ray]{
     Homi Bhabha National Institute, Training School Complex, Anushakti Nagar, Mumbai 400 094, India.
}
\email{jishnuray1992@gmail.com, jishnuray@hri.res.in}
\author[Pronay Kumar Karmakar]{Pronay Kumar Karmakar}
\address[Karmakar]{Harish-Chandra Research Institute,  Chhatnag Road, Jhunsi, Prayagraj  211 019 India.}
\email{kumarpro9math@gmail.com, pronaykarmakar@hri.res.in}


\keywords{Iwasawa invariants, modular forms, Bertolini--Darmon theta elements, anticyclotomic extension}
\subjclass[2020]{Primary 11R23; Secondary 11G05, 11R20}

\begin{abstract}
In this article we study the Iwasawa invariants of Bertolini--Darmon theta elements in the anticyclotomic $\Z_p$-extension of an imaginary quadratic field $K$ for weight two modular forms $f\in S_2(\Gamma_0(N))$. We cover both the cases of ordinary and non-ordinary reduction at a prime $p$. Our results extend the known results of Pollack--Weston and Gajek-Leonard--Lei in the cyclotomic setting.  
\end{abstract}

\maketitle

\section{Introduction}
Fix a prime $p\ge 5$ and $f=\sum_{n=1}^{\infty} a_n(f) q^n$ be a cuspidal newform of weight $k\ge 2$ and level $\Gamma_0(N)$ with $(N, p)=1$. Throughout this article we fix an embedding $\overline{\Q} \hookrightarrow \overline{\Q_p}$. Let $\cK$ be the finite extension of $\Qp$  generated by the Fourier coefficients of $f$ and $\cO$ be its ring of integers with a fixed uniformizer $\varpi$. In \cite{PW}, Pollack and Weston studied the Iwasawa invariants of Mazur-Tate elements denoted by $\theta_n\in \Z_p[\Gal(\Q(\mu_{p^n})/\Q)]$ attached to $f$ in the cyclotomic $\Zp$-extension of $\Q$. 

When $f$ is ordinary at $p$, it is shown in \cite[Proposition~3.7]{PW} that the Iwasawa invariants of $\theta_n(f)$ as $n\rightarrow \infty$ are related to the Iwasawa invariants of the $p$-adic $L$-function attached to $f$. When $f$ is non-ordinary at $p$ and $k\ge 2$, the Iwasawa invariants of $\theta_n(f)$ can be related to those of Pollack’s plus and minus $p$-adic $L$-functions in the case $a_p(f)=0$, and to the Iwasawa invariants of the $\sharp/\flat$ $p$-adic $L$-functions $L_p^{\sharp/\flat}(f,K)$, defined in \cite{spr17} in the case $a_p(f)\neq 0$ (see \cite{gajek-lei,gajek}).

For general weight $k\ge 2$ non-ordinary modular forms at $p$ satisfying the Fontaine--Laffaille condition (that is, $p>k-1$), Gajek-Leonard--Lei show in \cite{gajek-lei} that the Iwasawa invariants of $\theta_n(f)$ as $n\rightarrow \infty$ are related to the Iwasawa invariants of the $\sharp/\flat$ $p$-adic $L$-functions attached to $f$. More precisely, their result is the following. 

\begin{theorem} \cite[Theorem~A]{gajek-lei} \label{thm:quote}
Let $f$ be a cuspidal newform of weight $k\ge 2$ and level $N$ with $(N, p)=1$. Assume that $f$ is non-ordinary at $p$. If $p>k-1$ and $\mu(L_p^\sharp(f))=\mu(L_p^\flat(f))\ne\infty$, then for sufficiently large $n$, we have
\begin{equation*}
    \mu(\theta_n(f))=\mu(L_p^\ast(f)),
\end{equation*}and
\begin{equation*}
    \lambda(\theta_n(f))=(k-1)q_n + \lambda(L_p^\ast(f)).
\end{equation*}
where $\ast=\flat$ if $n$ is odd and $\ast=\sharp$ if $n$ is even.
\end{theorem}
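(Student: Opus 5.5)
Since Theorem~\ref{thm:quote} is quoted from \cite{gajek-lei}, the plan is to reconstruct their argument. The key input is Sprung's $\sharp/\flat$ decomposition of the Mazur--Tate elements. Let $\omega_n$ be the $n$-th cyclotomic polynomial in $\Lambda=\cO[[T]]$ and let $\Lambda_n=\Lambda/\omega_n$. One first shows that there are $2\times 2$ logarithm matrices $C_1,\dots,C_n$ with entries in $\Z_p[T]$, of the shape
\[
C_i=\begin{pmatrix} a_p & p^{k-1} \\ -\Phi_{p^i}(1+T) & 0\end{pmatrix}
\]
up to a twist, for which
\[
\bigl(\theta_n(f),\ \nu_{n}(\theta_{n-1}(f))\bigr)=\bigl(L_p^\sharp(f),\ L_p^\flat(f)\bigr)\, C_1\cdots C_n \pmod{\omega_n}.
\]
Here $\nu_n$ is the corestriction/trace map. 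This follows from the three-term relation $\pi(\theta_{n+1})=a_p\theta_n-p^{k-1}\nu(\theta_{n-1})$ satisfied by the Mazur--Tate elements, and from Sprung's construction of $L_p^{\sharp/\flat}$ by inverting $\prod C_i$ on the $p$-adic $L$-functions $L_p(f,\alpha),L_p(f,\beta)$; this inversion is where $p>k-1$ (Fontaine--Laffaille) is used.

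With that identity, $\theta_n(f)$ is a linear combination
\[
\theta_n(f)\equiv L_p^\sharp(f)\,u_n+L_p^\flat(f)\,v_n \pmod{\omega_n},
\]
where $u_n,v_n$ are the entries of the first column of $C_1\cdots C_n$. The next step is to compute the Iwasawa invariants of $u_n$ and $v_n$ modulo $\omega_n$. Since $p\mid a_p$ in the non-ordinary case, reducing mod $\varpi$ and tracking which terms of the matrix product survive shows the following. The $\mu$-invariants of both $u_n$ and $v_n$ are $0$. One of them is, up to units, a product $\prod \Phi_{p^i}$ over $i\le n$ of a fixed parity, and its $\lambda$-invariant is $\sum_{i\equiv n \ (2)}(p^i-p^{i-1})$. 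The other has strictly larger $\lambda$-invariant. For weight $k$, once the powers $p^{k-1}$ are kept, the minimal term contributes $(k-1)q_n$, where $q_n$ is the alternating sum $p^{n-1}-p^{n-2}+\cdots$. This is a direct $\varpi$-adic valuation bookkeeping, as in Pollack--Weston's $a_p=0$ computation.

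To finish, write $\mu=\mu(L_p^\sharp)=\mu(L_p^\flat)$ and divide by $\varpi^\mu$. Then $\mu(\theta_n)=\mu$, provided the leading-term combination does not vanish mod $\varpi$. In $\Lambda_n\otimes\mathbb F$ the $\lambda$-invariant is the $T$-adic valuation, and the term with smaller $T$-adic valuation dominates. For $n$ even this is $L_p^\sharp u_n$, giving $\lambda(L_p^\sharp)+(k-1)q_n$; for $n$ odd it is $L_p^\flat v_n$, giving $\lambda(L_p^\flat)+(k-1)q_n$. This requires $n$ large enough that $\lambda(L_p^\ast)+(k-1)q_n<p^n-p^{n-1}$, so the value is read correctly in $\Lambda_n$, and that the gap between the $\lambda$-invariants of $u_n$ and $v_n$ exceeds $|\lambda(L_p^\sharp)-\lambda(L_p^\flat)|$. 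Both gaps grow like $p^{n-1}$, so both conditions hold for large $n$. The hypothesis $\mu(L_p^\sharp)=\mu(L_p^\flat)$ is needed precisely so that neither term is suppressed by an extra power of $\varpi$.

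The main obstacle is the precise valuation computation for the entries of $C_1\cdots C_n$ modulo $(\varpi,\omega_n)$ in general weight. One must verify that exactly one entry attains the minimum $T$-adic order $(k-1)q_n$, with parity determined by $n$, and that the correction terms involving $a_p\neq 0$ only raise valuations. I would handle this by induction on $n$, using $\Phi_{p^i}(1+T)\equiv T^{p^i-p^{i-1}}\pmod p$ and $v_\varpi(a_p)>0$.
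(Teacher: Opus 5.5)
The paper gives no proof of this statement. It is quoted from \cite{gajek-lei}, and the paper only proves the weight-two anticyclotomic analogue, via Lemma~\ref{lm:Hfn}, the reduction of $H_{f,n}$ modulo $\varpi$, Lemma~\ref{lm:1} and Lemma~\ref{lm:3.5}. Your skeleton is the right one: a Sprung-type matrix identity for $\theta_n(f)$ modulo $\omega_n$, reduction modulo $\varpi$ using $\varpi\mid a_p$, then reading off invariants in $\mathbb{F}[G_n]$. For $k=2$ it is essentially that argument.

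\textbf{The gap: the source of the factor $k-1$.} The weight-$k$ step, which is exactly where $k-1$ must come from, fails as you set it up.
\begin{itemize}
\item With $C_i=\begin{pmatrix} a_p & p^{k-1}\\ -\Phi_{p^i}(1+T) & 0\end{pmatrix}$, both $a_p$ and $p^{k-1}$ are divisible by $\varpi$. So each $C_i$ is strictly lower triangular modulo $\varpi$, and $C_iC_{i+1}\equiv 0\pmod{\varpi}$. Hence both coefficients $u_n,v_n$ vanish modulo $\varpi$ for $n\ge 2$, and the induction in your last paragraph would prove the opposite of what you need.
\item The matrix is also incompatible with the recursion. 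The weight-$k$ three-term relation carries $p^{k-2}$, not $p^{k-1}$. Moreover, $C_i$ must reduce modulo $\omega_{i-1}$ to a matrix with characteristic polynomial $X^2-a_pX+p^{k-1}$, whereas your $\det C_i=p^{k-1}\Phi_{p^i}(1+T)\equiv p^{k}$.
\item Most fundamentally, powers of $p$ shift $\mu$-invariants, never $\lambda$-invariants. So ``keeping the powers $p^{k-1}$'' cannot produce $(k-1)q_n$.
\end{itemize}
The missing idea is that the Hodge--Tate weight $k-1$ enters through the cyclotomic entry. One off-diagonal entry must be a unit, and the other a product of $k-1$ cyclotomic factors. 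An example is the twisted product $\prod_{j=0}^{k-2}\Phi_{p^i}(u^{-j}(1+T))$ underlying Pollack's $\log^{\pm}_{p,k}$ and the weight-$k$ logarithm matrices. This product is $p^{k-1}$ times a unit modulo $\omega_{i-1}$. Since $u\equiv 1\pmod p$, it is also congruent to $T^{(k-1)(p^i-p^{i-1})}$ modulo $\varpi$. Only with such an entry is $C_i\equiv\begin{pmatrix}0&1\\ -T^{(k-1)(p^i-p^{i-1})}&0\end{pmatrix}$ modulo $\varpi$. Then the product is monomial and the surviving coefficient has $\lambda$-invariant $(k-1)q_n$.

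Note also a parity slip: the surviving factor is $\omega_n^{\mp}$, a product over $i\not\equiv n\pmod 2$. For $n$ even, $\sum_{i\ \mathrm{odd}}(p^i-p^{i-1})=q_n$, whereas your sum over $i\equiv n$ gives $q_{n+1}$.

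\textbf{A smaller inaccuracy: the roles of $u_n$, $v_n$ and of the $\mu$-hypothesis.} Modulo $\varpi$ the product of the logarithm matrices is diagonal or antidiagonal; compare \eqref{eq:Hfn} and the lemma following Proposition~\ref{main-thm}. So one of $u_n,v_n$ is divisible by $\varpi$, rather than having $\mu=0$ and a strictly larger $\lambda$-invariant. Already for $k=2$, $n=2$ the two coefficients are $a_p^2-\Phi_p(1+T)$ and $a_p$. After dividing $\theta_n(f)$ by $\varpi^{\mu}$, that term simply disappears in $\mathbb{F}[G_n]$. Your condition that the $\lambda$-gap exceed $|\lambda(L_p^\sharp)-\lambda(L_p^\flat)|$ is therefore unnecessary. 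The hypothesis $\mu(L_p^\sharp)=\mu(L_p^\flat)$ is used precisely to stop this $\varpi$-divisible term, whose coefficient need only be divisible by $a_p$, from acquiring a smaller $\mu$-invariant than the main term. Your phrasing of why that hypothesis is needed has this backwards.
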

Here, \begin{equation*}
    q_n=\begin{cases}p^{n-1}-p^{n-2}+\cdots +p-1 & \text{if } n \text{ is even}, \\
p^{n-1}-p^{n-2}+\cdots +p^2-p & \text{if } n \text{ is odd}.
\end{cases}
\end{equation*} 
and $q_0=q_1=0.$

In this article, we study the Iwasawa invariants of certain $p$-adic $L$-functions over the anticyclotomic $\Zp$-extension of an imaginary quadratic field. The analogues of Mazur-Tate elements in this setting are the Bertolini--Darmon theta elements defined in \cite{Ber-Dar05} and \cite{Dar-Iov08}. 

Let $f\in S_2(\Gamma_0(N))$ be a newform of weight $2$ and level $N$. Let $K$ be an imaginary quadratic field with odd discriminant satisfying
\begin{equation}\label{disc}
\tag{disc} -D_K<-4  \text{ and } (D_K, pN)=1
\end{equation}
 We write $N=N^+N^-$, where $N^+$ is divisible only by primes which are split in $K$ and $N^-$ by primes which are inert in $K$. We make the following assumption.
\begin{equation}\label{def}
\tag{def} N^-  \text{is square-free and  product of odd number of primes}
\end{equation}
 Let $K_\infty$ denote the anticyclotomic $\Zp$-extension of $K$ and $K^{(n)}$ be the subextension of $K_\infty$ such that $[K^{(n)}:K]=p^n$. Let $\cL_{f,n}\in \mathcal{O}[\Gal(K^{(n)}/K)]$ be the Bertolini--Darmon theta elements attached to $f$, see section \ref{BD_ele}. 

In the first part of this article, we consider the case when $f$ is ordinary at $p$. Following \cite{Kim26}, the anticyclotomic $p$-adic $L$-function $L_p(f, K_\infty)$ attached to $f$ is defined as the limit of the $p$-stabilized Bertolini--Darmon theta elements (see section \ref{ord-case}). Let $\mu(\cL_{f,n})$ and $\lambda(\cL_{f,n})$ are the Iwasawa invariants of $\cL_{f,n}$ defined in section \ref{BD_ele}. We first prove an analogue of \cite[Proposition~3.7]{PW} in the anticyclotomic setting. The result in this direction is as follows.  

\begin{thmalph}[Theorem~\ref{main-thm-ord}]\label{main-thm-ordinary-intro}
Let $f\in S_2(\Gamma_0(N))$ be a newform with good ordinary reduction at $p$. Assume the hypotheses (\ref{disc}) and (\ref{def}).  Suppose that the residual Galois representation 
    $\overline{\rho}_f$ attached to $f$ is irreducible.
Then, for all sufficiently large $n$, the following hold:
\begin{enumerate}
    \item $\mu(\mathcal{L}_{f,n}) = 0$;
    \item $2\lambda(\mathcal{L}_{f,n}) = \lambda\big(L_p(f/K_\infty)\big)$.
\end{enumerate}
\end{thmalph}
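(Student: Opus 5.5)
We follow the strategy of \cite[Proposition~3.7]{PW}, transported to the anticyclotomic tower. Let $\alpha$ be the unit root of $X^2-a_pX+p$. The $p$-stabilized theta elements are then built from $\cL_{f,n}$ by the usual stabilization formula, which involves the norm map $\pi_{n,n-1}\colon \cO[\Gal(K^{(n)}/K)]\to \cO[\Gal(K^{(n-1)}/K)]$ and the corestriction $\nu_{n-1,n}$. Concretely, we use
\begin{equation*}
\cL_{f,n}^{\alpha}=\alpha^{-n}\bigl(\cL_{f,n}-\alpha^{-1}\nu_{n-1,n}(\cL_{f,n-1})\bigr)
\end{equation*}
(or its analogue for the square $\cL_{f,n}\cL_{f,n}^{\iota}$, depending on the convention in section~\ref{ord-case}). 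These elements are norm-compatible, and their limit is $L_p(f/K_\infty)$, which is, up to the involution, the square of the limit of the $\cL_{f,n}^\alpha$. This squaring is where the factor $2$ in part (2) comes from: $\lambda(L_p(f/K_\infty))=2\lambda(\varprojlim \cL^\alpha_{f,n})$. We therefore reduce the statement to comparing $\cL_{f,n}$ with the projection of $\cL^\alpha_\infty:=\varprojlim\cL^\alpha_{f,n}$ to level $n$.

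\textbf{Key steps.} First, invert the stabilization. Since $\alpha$ is a unit, we get
\begin{equation*}
\cL_{f,n}=\alpha^{n}\cL^{\alpha}_{f,n}+\alpha^{-1}\nu_{n-1,n}(\cL_{f,n-1}).
\end{equation*}
Second, invoke vanishing of $\mu$. Under irreducibility of $\overline\rho_f$ together with (\ref{disc}) and (\ref{def}), the theorem of Vatsal (and Chida--Hsieh/Pollack--Weston in the anticyclotomic setting) gives $\mu(\cL^\alpha_\infty)=0$. This should also be recorded or quoted earlier in the paper. Consequently, for $n$ large, the image of $\cL^\alpha_\infty$ in $\cO[\Gal(K^{(n)}/K)]\cong\cO[T]/(\omega_n)$ has $\mu=0$ and $\lambda=\lambda(\cL^\alpha_\infty)$, as soon as $p^n>\lambda(\cL^\alpha_\infty)$. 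Third, control the correction term. The image of $\nu_{n-1,n}$ lies in the ideal generated by $\omega_n/\omega_{n-1}=\Phi_{p^n}(1+T)$, whose $\lambda$-invariant is $p^{n}-p^{n-1}$. An element of the form $\Phi_{p^n}(1+T)\cdot g$ with $\mu(g)=0$ therefore has $\lambda\ge p^n-p^{n-1}$, which exceeds $\lambda(\cL^\alpha_{f,n})$ for $n$ large. If instead $\mu(g)>0$, the term is divisible by $\varpi$. In either case, the sum $\cL_{f,n}$ has the same Iwasawa invariants as $\alpha^n\cL^\alpha_{f,n}$, namely $\mu=0$ and $\lambda=\lambda(\cL^\alpha_\infty)=\tfrac12\lambda(L_p(f/K_\infty))$. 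This uses the ultrametric principle that $\lambda(a+b)=\lambda(a)$ whenever $\lambda(a)<\lambda(b)$ and $\mu(a)=\mu(b)=0$, or $\mu(b)>0=\mu(a)$.

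\textbf{Main obstacle.} The delicate step is the third one. We must check that the correction term $\nu_{n-1,n}(\cL_{f,n-1})$ really has large $\lambda$ or positive $\mu$. The sharpest form of this needs an inductive argument: by induction on $n$, $\mu(\cL_{f,n-1})=0$, and one bounds $\lambda(\nu(\cL_{f,n-1}))\ge p^n-p^{n-1}$ directly from the explicit form of $\nu$ as multiplication by the sum over $\ker(\Gal(K^{(n)}/K)\to \Gal(K^{(n-1)}/K))$. We would also need to match the normalizations between the theta elements of \cite{Ber-Dar05} and the $p$-adic $L$-function of \cite{Kim26}, in particular whether $L_p$ is $\cL_\infty\cL_\infty^\iota$ or $\cL_\infty^2$, so that $\lambda(L_p)=2\lambda(\cL_\infty^\alpha)$ holds exactly. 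Here we would use that $\iota$ preserves $\lambda$.
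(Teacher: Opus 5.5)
Your proposal is correct and follows essentially the same route as the paper: invert the $\alpha$-stabilization, use Vatsal's $\mu=0$ together with Lemma~\ref{l3} at level $n$, and observe that the corestriction term lies in $\xi_n\Lambda_n$. That term therefore cannot change the invariants once $p^n-p^{n-1}>\lambda(\cL^\alpha_\infty)$, and the factor $2$ comes from $L_p=\cL^\alpha_\infty(\cL^\alpha_\infty)^\iota$.

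Your direct argument already yields $\mu(\cL_{f,n})=0$ without the induction you mention; the paper instead extracts this from the three-term relation. Phrase the case split cleanly: the correction term reduces mod $\varpi$ into $I_n^{p^n-p^{n-1}}$, possibly to zero. Splitting on $\mu(g)$ is the wrong split, because $\mu(g)=0$ does not force $\mu(\Phi_{p^n}g)=0$.
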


In the second part of this article, we consider the case when $f$ is non-ordinary at $p$. In this setting, we consider the two cases:
\subsection{$p$ is split in $K$:} In this setting, let $L_p^{\sharp/\flat}(f, K_\infty)$ be the integral $\sharp/\flat$ $p$-adic $L$-functions attached to $f$ as in section \ref{non-ord}. The second main result of this article is an analogue of Theorem \ref{thm:quote} in the anticyclotomic setting. Our result in this direction is as follows.

\begin{thmalph} \label{thm:B}[Theorem~\ref{main-thm-2}]\label{main-thm-non-ord-intro}
  Let $f\in S_2(\Gamma_0(N))$ be a newform with $a_p(f)\equiv0\mod{\varpi}$. Assume the hypotheses (\ref{disc}), (\ref{def})  and suppose that $p$ splits in $K$. Furthermore, assume that $\mu(L_p^\sharp(f, K_\infty))=\mu(L_p^\flat(f, K_\infty))\ne\infty$. 
  Then for all sufficiently large $n$, we have 
    \begin{enumerate}
        \item $\mu(L_p^\ast(f, K))=2\mu(\mathcal{L}_{f,n})$,
        \item $2\lambda(\mathcal{L}_{f,n})=2q_n+\lambda(L_p^\ast(f,K))$
    \end{enumerate}
where $\ast = \sharp$ if $n$ is even and $\ast = \flat$ if $n$ is odd.  
 \end{thmalph}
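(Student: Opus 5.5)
Following Gajek-Leonard--Lei in the cyclotomic case, the plan is to write the theta element at level $n$ as a product of a power of $\omega_n$-type polynomials with the $\sharp/\flat$ $p$-adic $L$-function. Throughout I identify $\cO[\Gal(K^{(n)}/K)]$ with $\cO[X]/(\omega_n(X))$, where $\omega_n(X)=(1+X)^{p^n}-1$, and write $\Phi_m$ for the $p^m$-th cyclotomic polynomial in $1+X$.

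\textbf{Step 1: the decomposition.} Since $p$ splits in $K$, the local representation at each prime above $p$ is the same as over $\Qp$. So the Sprung-type factorization of the two-variable (Coleman-map) construction applies, giving
\[
\bigl(\cL_{f,n}\cdot\cL_{f,n}^{\iota},\ \cL_{f,n-1}\cdot\cL_{f,n-1}^{\iota}\bigr)\equiv \bigl(L_p^\sharp,L_p^\flat\bigr)\, M_n \pmod{\omega_n}.
\]
Here $\iota$ is the involution, and $M_n$ is the product of the $2\times 2$ matrices $\begin{pmatrix} a_p & 1\\ -\Phi_m & 0\end{pmatrix}$, essentially $\mathcal{C}_1\cdots\mathcal{C}_n$, taken up to the usual projection. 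The square $\cL\cL^\iota$ is what interpolates $L$-values, and it explains the factors of $2$ in the statement. I will use the distribution relations of Bertolini--Darmon theta elements (the three-term relation $\pi_{n}\cL_{f,n+1}=a_p\cL_{f,n}-\xi_{n-1}\cL_{f,n-1}$) to check this matrix recursion inductively. This parallels the proof of \cite[Theorem~A]{gajek-lei}, stated above as Theorem~\ref{thm:quote}.

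\textbf{Step 2: reduce modulo $\varpi$.} Because $a_p\equiv 0\pmod\varpi$, the matrices become anti-diagonal mod $\varpi$. The product then collapses: for $n$ even the first entry is $\pm L_p^\sharp\prod\Phi_m$, the product running over odd $m\le n$ (or the appropriate parity), and for $n$ odd it is $\pm L_p^\flat\prod\Phi_m$ over the other parity. Summing $\deg\Phi_m=p^{m}-p^{m-1}$ over that alternating set gives exactly $q_n$. Since $\Phi_m\equiv X^{\deg\Phi_m}$ mod $\varpi$, the $\lambda$-invariant of the product is $q_n+\lambda(L_p^\ast)$. Here I use $q_n$ with the parity conventions from the introduction; the weight-two factor $k-1=1$ is absorbed into the squaring.

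\textbf{Step 3: deal with the $\mu$-invariant and the factor of $2$.} The hypothesis $\mu(L_p^\sharp)=\mu(L_p^\flat)=\mu<\infty$ lets me divide the whole identity by $\varpi^\mu$ and rerun the mod-$\varpi$ argument. The other term in the matrix product carries a factor of $a_p$, so after normalization it lies in higher $\varpi$-adic order or higher $X$-degree. Hence it does not affect the invariants once $n$ is large, namely once $q_n$ exceeds $\lambda$ of the other function, exactly as in \cite{gajek-lei}.

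Next, I need $\mu(\cL\cL^\iota)=2\mu(\cL)$ and $\lambda(\cL\cL^\iota)=2\lambda(\cL)$. This holds because $\iota$ preserves the $\varpi$-adic valuation and the $X$-adic valuation modulo $\varpi$: $\iota(X)=(1+X)^{-1}-1$ is a unit times $X$. Combining the two equalities gives (1) $\mu(L_p^\ast)=2\mu(\cL_{f,n})$ and (2) $2\lambda(\cL_{f,n})=2q_n+\lambda(L_p^\ast)$. For the latter, I expect the degree count from Step 2 to double because of the squaring. I will need to recheck whether the theta side uses $\cL\cL^\iota$ of degree $2q_n$ or whether $L_p^\ast$ is normalized as a square root, and then match the stated constant.

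\textbf{Main obstacle.} The hard part is Step 1: establishing the precise matrix factorization of the anticyclotomic theta elements in terms of the integral $\sharp/\flat$ functions. This requires the distribution relation for $\cL_{f,n}$ in exactly the form $a_p\cL_{f,n}-\Phi\cdot\cL_{f,n-1}$, together with the integrality of the $\sharp/\flat$ decomposition. I would first try to derive it directly from the Bertolini--Darmon compatibility of the theta elements under corestriction. I would then invoke the split-prime identification with the local cyclotomic theory to import Sprung's logarithm matrix.
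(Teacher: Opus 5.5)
\textbf{Gap: your Step~1 identity is wrong, and this is where the factor $2$ in $2q_n$ comes from.}

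The Sprung-type decomposition is \emph{linear in the unsquared objects}. By Lemma~\ref{lm:Hfn} (i.e.\ \cite[Theorem~3.5]{BBL22}),
\[
\begin{pmatrix}\cL_{f,n}\\ -\xi_{n-1}\cL_{f,n-1}\end{pmatrix}\equiv H_{f,n}\begin{pmatrix}\cL_f^\sharp\\ \cL_f^\flat\end{pmatrix}\pmod{\omega_n},
\]
and only afterwards is $L_p^\ast(f,K):=\cL_f^\ast(\cL_f^\ast)^\iota$ defined. A relation expressing $\cL_{f,n}\cL_{f,n}^\iota$ linearly in $L_p^\sharp,L_p^\flat$ through the same matrices with entries $a_p,1,-\Phi_m$ cannot hold, for two reasons:
\begin{itemize}
\item Squaring $A\cL_f^\sharp+B\cL_f^\flat$ produces cross terms $\cL_f^\sharp(\cL_f^\flat)^\iota$ that lie outside the span of $L_p^\sharp,L_p^\flat$.
\item Even for $a_p=0$ and $n$ even, Lemma~\ref{lm:2} gives $\cL_{f,n}\cL_{f,n}^\iota\equiv\omega_n^-(\omega_n^-)^\iota L_p^\sharp=\gamma^{-q_n}(\omega_n^-)^2L_p^\sharp \pmod{\omega_n}$. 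This involves the \emph{square} of $\omega_n^-$, not $\omega_n^-$ itself.
\end{itemize}
Your second coordinate $\cL_{f,n-1}\cL_{f,n-1}^\iota$ also lives in $\Lambda_{n-1}$. It has to be the corestriction $-\xi_{n-1}\cL_{f,n-1}$ to make sense modulo $\omega_n$.

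If you carried your identity through Steps~2--3, you would get $\lambda(\cL_{f,n}\cL_{f,n}^\iota)=q_n+\lambda(L_p^\ast)$, i.e.\ $2\lambda(\cL_{f,n})=q_n+\lambda(L_p^\ast)$. That contradicts the statement. You notice the mismatch but leave it open, and the remark that the factor $k-1$ is absorbed into the squaring is not an argument.

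\textbf{The repair:} run your Steps~2--3 on the linear relation and square only at the end.
\begin{itemize}
\item Since $\mu(L_p^\ast)=2\mu(\cL_f^\ast)$, the hypothesis gives $\mu(\cL_f^\sharp)=\mu(\cL_f^\flat)=:\mu$.
\item Modulo $\varpi$, the matrix $H_{f,n}$ collapses to $\pm\operatorname{diag}(\omega_n^-,\omega_n^+)$ or its antidiagonal analogue. Hence, modulo $(\varpi,\omega_n)$, $\varpi^{-\mu}\cL_{f,n}\equiv\pm\omega_n^-\varpi^{-\mu}\cL_f^\sharp$ for $n$ even, and $\varpi^{-\mu}\cL_{f,n}\equiv\pm\omega_n^+\varpi^{-\mu}\cL_f^\flat$ for $n$ odd.
\item Lemma~\ref{lm:1} and Lemma~\ref{lm:3.5}, applicable once $q_n+\lambda(\cL_f^\ast)<p^n$, then give $\mu(\cL_{f,n})=\mu(\cL_f^\ast)$ and $\lambda(\cL_{f,n})=q_n+\lambda(\cL_f^\ast)$.
\item Now pass to $L_p^\ast$ in $\Lambda$, where $\Lambda/\varpi$ is a domain and $\iota$ preserves invariants (Remark~\ref{rmk:1}). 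This gives $\mu(L_p^\ast)=2\mu(\cL_f^\ast)$ and $\lambda(L_p^\ast)=2\lambda(\cL_f^\ast)$, which is exactly where $2q_n$ comes from.
\end{itemize}
This is how the paper argues.

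Finally, what you single out as the main obstacle needs no Coleman maps and no comparison with the cyclotomic local theory at the split prime. Here $\cL_f^{\sharp/\flat}$ are \emph{constructed} from the three-term relation \eqref{three-term} in \cite[Theorem~3.5]{BBL22}, so the linear decomposition is simply the cited input.
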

\begin{remark}
    If $a_p=0$, we know from \cite[Theorem 2.5]{PW2011} that $\mu(L_p^\sharp(f, K_\infty))=\mu(L_p^\flat(f, K_\infty))=0$.
\end{remark}

\subsection{$p$ is inert in $K$:} In this case, we assume that $f\in S_2(\Gamma_0(N))$ is coming from an elliptic curve with supersingular reduction at $p$. Let $L_p^\pm(f, K)$ be the $p$-adic $L$-function as in section \ref{inert_sec}.  As $p\ge 5$, we have that $a_p(f)=0$. The main result in this is the following:
\begin{thmalph}\label{thm:C}
Let $f\in S_2(\Gamma_0(N))$ be a newform with $a_p(f)=0$. Assume the hypotheses (\ref{disc}), (\ref{def}) and suppose that $p$ is inert in $K$. Furthermore, assume that $\mu(L_p^+(f, K_\infty))=\mu(L_p^-(f, K_\infty))<\infty$.  Then for sufficiently large $n$, we have the following: 
\[\mu(L_p^\pm(f, K))=2\mu(\mathcal{L}_{f,n})
 \text{ and }
2\lambda(\mathcal{L}_{f,n})=\begin{cases}
  2(q_n+1)+\lambda(L_p^+(f,K)) & \text{if $n$ is even}, \\
  2q_n+\lambda(L_p^-(f,K))& \text{if $n$ is odd}. 
\end{cases}
\]

 \end{thmalph}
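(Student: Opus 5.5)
Our plan is to follow the strategy of \cite{PW} and \cite{gajek-lei}, working at the level of finite layers. The argument has three steps.

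\textbf{Step 1: the plus/minus decomposition at finite level.} The Bertolini--Darmon elements satisfy a three-term norm relation. Since $p$ is inert in $K$, the prime above $p$ is unramified of degree $2$, and the Hecke eigenvalue entering the relation is $a_p=0$. The relation then reads
\[
\pi_{n+1,n}(\mathcal{L}_{f,n+1}) = -\,\xi_n(\mathcal{L}_{f,n-1}),
\]
up to a unit and a sign normalisation, where $\pi_{n+1,n}$ is projection and $\xi_n$ is corestriction (trace) from $K^{(n-1)}$ to $K^{(n)}$. Iterating this relation, we write $\mathcal{L}_{f,n}$ as the image in $\mathcal{O}[\Gal(K^{(n)}/K)]$ of
\[
\omega_n^{\mp}\, L_p^{\pm}(f,K) \quad\text{with } \pm=(-1)^n,
\]
up to a unit. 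Here $\omega_n^{\pm}$ are Pollack's half-cyclotomic products $\omega_n^+=\prod_{2\mid m\le n}\Phi_m(1+T)$ and $\omega_n^-=T\prod_{2\nmid m\le n}\Phi_m(1+T)$. This is exactly how $L_p^\pm(f,K)$ are built from the theta elements in section~\ref{inert_sec}, since they are defined as limits of $\mathcal{L}_{f,n}/\omega_n^{\mp}$. The key point to verify is an exact identity modulo $\omega_n=(1+T)^{p^n}-1$, not merely one up to bounded denominators. We would check it by induction on $n$, using the norm relation together with the factorisation $\omega_n=\omega_n^+\omega_n^-$.

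\textbf{Step 2: the squared theta element.} The $p$-adic $L$-function interpolates squares of toric periods, so $L_p$ corresponds to $\mathcal{L}_{f,n}\mathcal{L}_{f,n}^{\iota}$, where $\iota$ is the involution. This is what produces the factor $2$ in front of $\mu(\mathcal{L}_{f,n})$ and $\lambda(\mathcal{L}_{f,n})$, just as in Theorem~\ref{main-thm-ordinary-intro}. The involution preserves $\mu$ and $\lambda$, so the Iwasawa invariants of $\mathcal{L}_{f,n}\mathcal{L}_{f,n}^{\iota}$ are $2\mu(\mathcal{L}_{f,n})$ and $2\lambda(\mathcal{L}_{f,n})$. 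The squared element equals, modulo $\omega_n$, $(\omega_n^{\mp})^2$ times the (squared) $L_p^\pm(f,K)$, up to a unit.

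\textbf{Step 3: computing the invariants.} We use the standard lemma (as in \cite[Lemma 3.3]{PW}): if $F\in\Lambda$ satisfies $\mu(F)=\mu$ and $\lambda(F)<\deg$ of the relevant range, then the image of $F$ in $\Lambda/\omega_n$ has the same $\mu$ and $\lambda$. We apply it to $F=(\omega_n^{\mp})^2L_p^\pm$. The multiplier is a distinguished polynomial, so it contributes nothing to $\mu$ and contributes $\deg\omega_n^{\mp}$ to $\lambda$. This gives $\mu(L_p^\pm)=2\mu(\mathcal{L}_{f,n})$, and the assumption that the two $\mu$-invariants are equal ensures the statement is uniform in the parity of $n$. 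For $\lambda$ we count degrees:
\[
\deg\omega_n^-=1+\sum_{\substack{m\le n\\ m\text{ odd}}}(p^m-p^{m-1}), \qquad \deg\omega_n^+=\sum_{\substack{m\le n\\ m\text{ even}}}(p^m-p^{m-1}).
\]
Comparing with $q_n$, which involves the indices $m\le n-1$, the exponent $p^n-p^{n-1}$ must be absorbed by passing from $\Lambda/\omega_n$ to the $\Phi_n$-free part. The bookkeeping should yield $2(q_n+1)$ for even $n$ and $2q_n$ for odd $n$, where the extra $+1$ comes from the factor $T$ in $\omega^-$. Because this parity is shifted relative to the split case, the degree count must be done carefully.

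\textbf{Main obstacle.} The hardest part is the hypothesis of the lemma in Step 3: we need $\lambda(F)<p^n-p^{n-1}$ or a similar bound, and that $F$ not be divisible by $\varpi$-adic or cyclotomic factors in an uncontrolled way. Here we use $\mu(L_p^+)=\mu(L_p^-)<\infty$. It guarantees $L_p^\pm\ne0$, so $\lambda(L_p^\pm)$ is finite and fixed, and for $n$ large the degree condition holds automatically. The remaining delicate point is the unit and normalisation ambiguity in the norm relation of Step 1 for inert $p$. If that relation introduces a factor of $p$ or of $\varpi$, the equality of $\mu$-invariants would fail, so we would check it first against the construction in section~\ref{inert_sec}.
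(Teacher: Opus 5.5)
Your skeleton matches the paper's: the decomposition of Lemma \ref{lm:inert}, the lifting lemma for elements with $\lambda<p^n$, and the degree count of Lemma \ref{lm:1:inert}. However, two steps do not go through as written.

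\textbf{Step 1 uses the wrong element.} The factor multiplying $\omega_n^{\mp}$ is not $L_p^\pm(f,K)$. It is $\cL_f^\pm=\varprojlim\cL_{f,n}^\pm$, the limit of $\cL_{f,n}/\omega_n^{\mp}$, obtained via the isomorphisms $\Lambda_n^{\pm}\xrightarrow{\ \omega_n^{\mp}\ }\Lambda_n[\omega_n^{\pm}]$ of \cite[Lemma~4.4]{BLV}. The $p$-adic $L$-function is the product $L_p^\pm(f,K)=\cL_f^\pm(\cL_f^\pm)^\iota$. Read literally, your Steps 1--2 give $\cL_{f,n}\cL_{f,n}^\iota\equiv u\,(\omega_n^{\mp})^2(L_p^\pm)^2$, which double-counts $\lambda(L_p^\pm)$. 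Step 3 then silently switches to the correct $F=(\omega_n^{\mp})^2L_p^\pm$.

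\textbf{Step 2 is the more serious gap.} You claim that $\cL_{f,n}\cL_{f,n}^\iota$ has invariants $2\mu(\cL_{f,n})$ and $2\lambda(\cL_{f,n})$ because $\iota$ preserves invariants. This does not follow in $\Lambda_n=\cO[G_n]$. Lemma \ref{l1} only gives $\mu(xy)\ge\mu(x)+\mu(y)$, and additivity of $\lambda$ only when $\mu(xy)=0$. Since $\mathbb{F}[G_n]\cong\mathbb{F}[T]/(T^{p^n})$, the reduction of $\varpi^{-2\mu(\cL_{f,n})}\cL_{f,n}\cL_{f,n}^\iota$ is $T^{2\lambda(\cL_{f,n})}$ times a unit. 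That is $0$ as soon as $2\lambda(\cL_{f,n})\ge p^n$. So the invariants of the square determine those of $\cL_{f,n}$ only under an a priori bound on $\lambda(\cL_{f,n})$, which is exactly what you are trying to compute.

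\textbf{The fix, which is what the paper does, is never to square at finite level.}
\begin{enumerate}
\item Apply the lifting lemma directly to $\omega_n^{\mp}\cL_f^\pm\in\Lambda$. Its $\lambda$-invariant is $\lambda(\omega_n^{\mp})+\lambda(\cL_f^\pm)$, which is $<p^n$ for $n\gg0$.
\item This gives $\mu(\cL_{f,n})=\mu(\cL_f^\pm)$ and $\lambda(\cL_{f,n})=\lambda(\omega_n^{\mp})+\lambda(\cL_f^\pm)$ directly.
\item Then double inside $\Lambda$. There $\mu$ and $\lambda$ are additive, since $\mathbb{F}[[T]]$ is a domain, and $\iota$-invariant by Remark \ref{rmk:1}. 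Hence $\mu(L_p^\pm)=2\mu(\cL_f^\pm)$ and $\lambda(L_p^\pm)=2\lambda(\cL_f^\pm)$.
\end{enumerate}
Once this direct computation is in hand, the bound $2\lambda(\cL_{f,n})<p^n$ follows for $n\gg0$ because $p\ge5$. Your Step 2 then becomes valid, but it is redundant.

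\textbf{Degree count in Step 3.} There is no $p^n-p^{n-1}$ to absorb. Every cyclotomic factor $\Phi_j$ in the multiplier has $j$ of parity opposite to $n$, so $\Phi_n$ never occurs. Hence $\lambda(\omega_n^-)=q_n+1$ for $n$ even and $\lambda(\omega_n^+)=q_n$ for $n$ odd, exactly as in Lemma \ref{lm:1:inert}.
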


 The idea of the proof is as follows. We first show that the  $\sharp/\flat$ $p$-adic $L$-functions  $L_p^{\sharp/\flat}(f, K_\infty)\mod{\omega_n}$ can be written in  terms of Bertolini--Darmon theta elements using Sprung type matrices, see Lemma \ref{lm:Hfn}. Then, using Lemma \ref{l3} and Lemma \ref{lm:3.5}, we show that the Iwasawa invariants of $L_p^{\sharp/\flat}(f, K_\infty)$ are related to the Iwasawa invariants of $\cL_{f,n}$ for sufficiently large $n$.

\begin{remark}
In this short remark we will note an interesting analogy of these growth factors ($2q_n$ in Theorem \ref{thm:B} if $p$ splits in $K$ and $2(q_n+1), 2q_n$ accordingly when $n$ is even or odd in Theorem \ref{thm:C} if $p$ is inert in $K$) appears exactly in the growth of Tate-Shafarevich groups along anticyclotomic tower.

\vspace{.2cm}

    Suppose $E/\Q$ is an elliptic curve with $p$ an odd supersingular prime for $E$ with $a_p(E)=0$. Suppose $p$ splits in the imaginary quadratic field $K$ and $p \nmid h_K$. Then, under certain restrictive hypotheses (which imply that  both the plus and minus Selmer groups have trivial Iwasawa invariants),  which are 
    \begin{enumerate}
        \item $p \nmid \mathrm{Tam}(E/K)$; here $\mathrm{Tam}(E/K)$ is the Tamagawa factor of $E$ over $K$,
        \item $\Sha(E/K)[p]=0$,
        \item $E(K)/pE(K)=0$,
    \end{enumerate}
     Iovita--Pollack \cite[Theorem 5.1]{Iovita_Pollack06} showed that 
     \begin{equation*}\label{ip}
\mathrm{ord}_p(\#\Sha(E/K^{(n)})[p^\infty])-\mathrm{ord}_p(\#\Sha(E/K^{(n-1)})[p^\infty])=2q_n. 
     \end{equation*}
 In a more general setup, when the prime $p$ is inert in $K$, under the assumptions that 
 \begin{enumerate}
     \item the dual signed Selmer groups $\Sel^\pm(E/K_\infty)$ are cotorsion Iwasawa modules,
     \item $a_p(E)=0$ and  $p \nmid h_K$, 
     \item $\Sha(E/K^{(n)})[p^\infty]$ is finite for all $n \geq 0$,
 \end{enumerate}
 I\c{s}ik--Lei \cite[Theorem A]{IsikLei25} has proved that 
 \begin{equation*}\label{il}
\mathrm{ord}_p(\#\Sha(E/K^{(n)})[p^\infty])-\mathrm{ord}_p(\#\Sha(E/K^{(n-1)})[p^\infty]) = 
\begin{cases} 
2q_n +\lambda_{-} +\phi(p^n)\mu_{-}-r_\infty& \text{if } n \text{ is odd,} \\
2(q_n+1) +\lambda_{+} +\phi(p^n)\mu_{+}-r_\infty & \text{if } n \text{ is even.}
\end{cases}
\end{equation*}
 Here $\lambda_\pm$ and $\mu_\pm$ are the Iwasawa invariants of the Pontryagin dual of $\Sel^\pm(E/K_\infty)$ and $r_\infty$ is a constant defined as $\mathrm{sup}_{n \geq 0}\{\rank_{\Z}E(K^{(n)})\}.$
\end{remark}

\section*{Acknowledgment} The first author gratefully acknowledges the support received
from the HRI postdoctoral fellowship. The last two authors gratefully acknowledge support from the Inspire Research Grant, Department of Science and Technology, Government of India.


\section{Iwasawa invariants in finite-level group algebras}\label{Iwasawa-invar}
In this section, we recall the definitions and some properties of the Iwasawa invariants of elements of finite-level group algebras, which will be used in later sections.

Let $K_\infty$ be a $\Z_p$-extension of $K$ with Galois group $G\cong \Zp$, and $K^{(n)}$ be the subextension of $K_\infty$ with Galois group $G_n$ of order $p^n$.
Let us fix a finite integrally closed extension $\mathcal{O}$ of $\Z_p$ and set $\Lambda_n=\cO[G_n]$.   Let $\Lambda=\varprojlim \mathcal{O}[G_n]$ denote the Iwasawa algebra. Note that  $\Lambda\cong \mathcal{O}[[T]]$ via the map $\gamma\mapsto 1+T$, where $\gamma$ is a topological generated of $G\cong \Zp$. Let $\varpi$ be a fixed uniformizer of $\cO$ and for any integer $n\ge 1$, define $\omega_n:=(1+T)^{p^n}-1\in\cO[[T]]$.

For any $P\in \Lambda$, write $P=\sum_{j=0}^{\infty} b_jT^j$.  The Iwasawa invariants of $P$ are defined as follows: 
\begin{equation*}
    \mu(P)=\min_j \ord_p(b_j),
\end{equation*}
\begin{equation*}
      \lambda(P)=\min \{j:\ord_p(b_j)=\mu(P)\}.
\end{equation*}
It is to be noted that the above definitions are independent of the choice of isomorphism $\Lambda\cong \mathcal{O}[[T]]$.\\
We now define the Iwasawa invariants in the finite-level group algebras $\mathcal{O}[G_n]$. Suppose $\theta\in \mathcal{O}[G_n]$ such that $\theta=\sum_{\sigma\in G_n} c_{\sigma}\sigma$, we then define 
\begin{equation*}
    \mu(\theta)=\min_{\sigma\in G_n} \ord_p(c_\sigma).
\end{equation*}
To define the $\lambda$-invariants, let us first fix a uniformizer $\pi$ of $\mathcal{O}$, and set $\theta'=\pi^{-\mu(\theta)}\theta$. Let $\Bar{\theta'}$ denote the non-zero image of $\theta'$ under the natural map $\mathcal{O}[G_n]\longrightarrow \mathbb{F}[G_n]$, where $\mathbb{F}$ is the residue field of $\mathcal{O}$.\\
Since all ideals of $\mathbb{F}[G_n]$ are of the form $I_n^j$, where $I_n$ is the augmentation ideal. We then define 
\begin{equation*}
    \lambda(\theta)=\ord_{I_n}\Bar{\theta'}=\max \{j:\Bar{\theta'}\in I_n^j \}.
\end{equation*}

We now state and prove some lemmas following the work of \cite{P05} and \cite{PW}. 

\begin{lemma}\label{l1}
    For $f,g\in \Lambda_n$ we have
    \begin{enumerate}
        \item $\mu(fg)\geq \mu(f)+\mu(g)$,
        \item $\lambda(fg)=\lambda(f)+\lambda(g)$ if $\mu(fg)=0$.
    \end{enumerate}
\end{lemma}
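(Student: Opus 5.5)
The plan is to handle both parts directly from the definitions, using the identification $\Lambda_n=\cO[G_n]\cong \cO[T]/(\omega_n)$ with $T=\gamma-1$. Under this identification the augmentation ideal of $\mathbb{F}[G_n]$ is generated by $T$. Since $\omega_n\equiv T^{p^n}\pmod{\varpi}$, we also have $\mathbb{F}[G_n]\cong \mathbb{F}[T]/(T^{p^n})$. This ring is a truncated polynomial ring, so it is local with every ideal of the form $(T^j)$. This is the fact underlying the definition of $\lambda$.

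For part (1), write $f=\varpi^{a}f'$ and $g=\varpi^{b}g'$ with $a=\mu(f)$ and $b=\mu(g)$; here $f',g'\in\Lambda_n$ because $\mu$ measures the minimum valuation of the coefficients $c_\sigma$. (If $f$ or $g$ is zero the inequality is trivial, with the convention $\mu(0)=\infty$.) Then $fg=\varpi^{a+b}f'g'$. The product $f'g'$ lies in $\cO[G_n]$, since group ring multiplication only adds and multiplies coefficients. Hence every coefficient of $fg$ has valuation at least $a+b$ (measured in $\ord_\varpi$; with $\ord_p$ normalised compatibly the same argument applies after scaling). This gives $\mu(fg)\ge \mu(f)+\mu(g)$.

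For part (2), assume $\mu(fg)=0$. By part (1) this forces $\mu(f)=\mu(g)=0$. Consequently $f'=f$, $g'=g$, and the reductions $\bar f,\bar g\in\mathbb{F}[G_n]$ are nonzero, as is $\overline{fg}=\bar f\bar g$. In $\mathbb{F}[T]/(T^{p^n})$, write $\bar f=T^{\lambda(f)}u$ and $\bar g=T^{\lambda(g)}v$. Here $u,v$ are units, because an element not in $(T)$ has nonzero constant term and is therefore invertible in this local ring. The exponents are the $\lambda$'s precisely because $\lambda=\max\{j:\bar\theta\in I_n^j\}$ and $I_n^j=(T^j)$. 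It follows that $\bar f\bar g=T^{\lambda(f)+\lambda(g)}uv$. This is nonzero, so $\lambda(f)+\lambda(g)<p^n$, and $uv$ is a unit. Therefore $\lambda(fg)=\lambda(f)+\lambda(g)$.

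The only delicate point is confirming that $I_n$ is generated by $\gamma-1$ and that $\mathbb{F}[G_n]\cong\mathbb{F}[T]/(T^{p^n})$. This holds because $G_n$ is cyclic of $p$-power order and we are in characteristic $p$: $(1+T)^{p^n}-1=T^{p^n}$ in $\mathbb{F}[T]$. Once that identification is in place, both parts are routine.
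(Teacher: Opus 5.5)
Your proof is correct and follows the same route as the paper. Part (1) factors out powers of the uniformizer. Part (2) deduces $\mu(f)=\mu(g)=0$ and then uses additivity of $\ord_{I_n}$ on the nonzero product $\bar f\bar g$. You also justify that additivity, which the paper simply asserts: you identify $\mathbb{F}[G_n]\cong\mathbb{F}[T]/(T^{p^n})$, factor $\bar f$ and $\bar g$ as powers of $T$ times units, and observe that $\bar f\bar g\neq 0$ forces $\lambda(f)+\lambda(g)<p^n$.
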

\begin{proof}
  \noindent  \begin{enumerate}
        \item By the definition of the $\mu$-invariant we have, $f\in p^{\mu(f)}\Lambda_n$ and $g\in p^{\mu(g)}\Lambda_n$. This implies $fg\in p^{\mu(f)+\mu(g)}\Lambda_n$. This proves the first part.
    \item  It is clear that $\mu(f)=\mu(g)=0$, since $\mu(fg)=0$. Therefore the $\lambda$-invariants of $f, g$ and $fg$ are defined.  Now we have 
    \[\lambda(fg)=\ord_{I_n}(\overline{fg})=\ord_{I_n}(\bar{f})+\ord_{I_n}(\bar{g})=\lambda(f)+\lambda(g). \]
    
    \end{enumerate}

\end{proof}
Let $\pi_{n-1}^n:\cO[G_n]\longrightarrow \cO[G_{n-1}]$ be the natural projection map and  let $\nu_{n-1}^n: \cO[G_n]\rightarrow \cO[G_{n-1}]$ denote the corestriction map defined as
\[
\nu_{n-1}^n(\sigma):=\sum_{\underset{\tau\in G_n}{\tau\mapsto \sigma}}\tau, \,\,\,\,\,\, \text{ for } \sigma\in G_n. 
\]
We also define $\xi_n$ as follows;  $$\xi_n:=\sum_{\underset{\sigma^p=1}{\sigma\in G_n}}\sigma \in \cO[G_n].$$  
\begin{lemma}\label{l2}
    For $f\in \Lambda_{n-1}$ and $g\in \Lambda_n$ we have
    \begin{enumerate}
         \item $\Image(\nu_{n-1}^n)={\xi}_n\Lambda_n$, 
        \item $\pi_n^{n-1}(\nu_{n-1}^n(f))=p\cdot f$,
        \item $\nu_{n-1}^n(\pi_n^{n-1}(g))={\xi}_n\cdot g$.
    \end{enumerate}
\end{lemma}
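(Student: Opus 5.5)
The whole lemma follows from one formula for the corestriction. I read $\nu_{n-1}^n$ as a map $\Lambda_{n-1}\to\Lambda_n$ and $\pi_n^{n-1}$ as the projection $\pi_{n-1}^n:\Lambda_n\to\Lambda_{n-1}$; this is the only reading under which the compositions in the statement make sense. Let $H=\ker(G_n\to G_{n-1})$. Since $G_n$ is cyclic of order $p^n$, $H$ is its unique subgroup of order $p$, namely $H=\{\sigma\in G_n:\sigma^p=1\}$. Hence $\xi_n=\sum_{h\in H}h$.

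\textbf{The basic formula.} For $\sigma\in G_{n-1}$, pick any lift $\tau\in G_n$. The set of all lifts of $\sigma$ is exactly $\tau H$, so
\[
\nu_{n-1}^n(\sigma)=\sum_{h\in H}\tau h=\tau\,\xi_n .
\]
This does not depend on the choice of $\tau$, because $h'\xi_n=\xi_n$ for every $h'\in H$. Extending $\mathcal O$-linearly, we get
\[
\nu_{n-1}^n(f)=\tilde f\,\xi_n \qquad\text{for any } \tilde f\in\Lambda_n \text{ with } \pi_{n-1}^n(\tilde f)=f .
\]
Such lifts $\tilde f$ exist because $\pi_{n-1}^n$ is surjective.

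\textbf{Part (3).} Any $g\in\Lambda_n$ is itself a lift of $\pi_{n-1}^n(g)$. The basic formula therefore gives $\nu_{n-1}^n(\pi_{n-1}^n(g))=g\,\xi_n$.

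\textbf{Part (1).} By the basic formula, every element $\nu_{n-1}^n(f)=\tilde f\,\xi_n$ lies in $\xi_n\Lambda_n$. Conversely, part (3) shows that any $g\,\xi_n$ equals $\nu_{n-1}^n(\pi_{n-1}^n(g))$, so it lies in the image. Hence $\Image(\nu_{n-1}^n)=\xi_n\Lambda_n$.

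\textbf{Part (2).} The projection $\pi_{n-1}^n$ is a ring homomorphism that sends each $h\in H$ to $1$. Since $|H|=p$, this gives $\pi_{n-1}^n(\xi_n)=p$. Therefore
\[
\pi_{n-1}^n\bigl(\nu_{n-1}^n(f)\bigr)=\pi_{n-1}^n(\tilde f)\,\pi_{n-1}^n(\xi_n)=p\,f .
\]

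There is no substantive obstacle. The only points needing care are the well-definedness of $\tau\xi_n$, which was checked above, and keeping the directions of the maps straight.
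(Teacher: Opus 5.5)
Your proof is correct. The identity $\nu_{n-1}^n(\sigma)=\tau\,\xi_n$ for any lift $\tau$, together with $\pi_{n-1}^n(\xi_n)=p$, gives all three parts, and you have correctly fixed the directions of $\nu_{n-1}^n$ and of the projection, which the paper's definition states backwards. The paper gives no argument of its own and simply cites \cite[Lemma~4.6]{P05}; your direct group-ring computation is the standard verification behind that reference.
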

\begin{proof} See  \cite[Lemma. 4.6]{P05}.
\end{proof}

\begin{lemma}\label{xi}
    We have $\mu(\xi_n)=0$ and $\lambda(\xi_n)=p^n-p^{n-1}$.
\end{lemma}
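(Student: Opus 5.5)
Throughout, write $\Lambda_n=\cO[G_n]\cong\cO[T]/(\omega_n)$ with $\gamma\mapsto 1+T$. The plan is to express $\xi_n$ explicitly as a polynomial in $T$ modulo $\omega_n$ and then read off both invariants from that expression. The elements $\sigma\in G_n$ with $\sigma^p=1$ form the unique subgroup of order $p$ in the cyclic group $G_n$. This subgroup is generated by $\gamma^{p^{n-1}}$, so
\[
\xi_n=\sum_{i=0}^{p-1}\gamma^{ip^{n-1}}=\sum_{i=0}^{p-1}(1+T)^{ip^{n-1}}=\frac{(1+T)^{p^n}-1}{(1+T)^{p^{n-1}}-1}=\frac{\omega_n}{\omega_{n-1}}=\Phi_{p^n}(1+T),
\]
where $\Phi_{p^n}$ denotes the $p^n$-th cyclotomic polynomial.

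\textbf{The $\mu$-invariant.} In the group-element basis, $\xi_n$ has coefficient $1$ on each of the $p$ elements of the order-$p$ subgroup and $0$ elsewhere. Hence $\mu(\xi_n)=\min_\sigma \ord_p(c_\sigma)=0$ directly from the definition. This also shows that $\bar\xi_n\neq 0$ in $\mathbb{F}[G_n]$, so $\lambda(\xi_n)$ is defined with $\xi_n'=\xi_n$.

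\textbf{The $\lambda$-invariant.} Under the isomorphism $\mathbb{F}[G_n]\cong\mathbb{F}[T]/(T^{p^n})$, coming from $\omega_n\equiv T^{p^n}\pmod{\varpi}$, the augmentation ideal $I_n$ corresponds to $(T)$. Thus $\ord_{I_n}$ of a nonzero class is the $T$-adic valuation of any representative polynomial of degree $<p^n$. Reducing modulo $\varpi$ (here $p\in\varpi\cO$) gives
\[
\bar\xi_n=\frac{T^{p^n}}{T^{p^{n-1}}}=T^{p^n-p^{n-1}}
\]
in $\mathbb{F}[T]$. This is legitimate because $(1+T)^{p^k}\equiv 1+T^{p^k}$ in characteristic $p$, and the identity $\omega_n=\omega_{n-1}\Phi_{p^n}(1+T)$ holds in $\cO[T]$ before reduction. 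Since $p^n-p^{n-1}<p^n$, this class is nonzero and lies in $I_n^{p^n-p^{n-1}}$ but not in $I_n^{p^n-p^{n-1}+1}$. Therefore $\lambda(\xi_n)=p^n-p^{n-1}$.

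\textbf{Main obstacle.} The only real point is to identify $I_n$ with $(T)$ and to check that reducing a polynomial representative of degree less than $p^n$ loses no information modulo $T^{p^n}$. Both follow because $\{T^j\}_{j<p^n}$ is an $\cO$-basis of $\Lambda_n$.
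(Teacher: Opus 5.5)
Your proposal is correct and follows essentially the same route as the paper. Both write $\xi_n$ as the cyclotomic quotient $(\gamma^{p^n}-1)/(\gamma^{p^{n-1}}-1)$ and reduce it modulo $p$ to $(\gamma-1)^{p^n-p^{n-1}}$, with $I_n=(\gamma-1)$. You add two small points of care: you justify the quotient as an identity in $\cO[T]$ before passing to $\Lambda_n$, and you read $\mu(\xi_n)=0$ directly from the group-ring coefficients.
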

\begin{proof}
    Suppose $\gamma$ is a generator of $G_n$,  then $I_n$ is generated by $\gamma-1$. Therefore we have 
    \begin{equation*}
        \xi_n=\sum_{\sigma^p=1}\sigma=\sum_{a=0}^{p-1}\gamma^{ap^{n-1}}=\gamma^{p^n}-1/\gamma^{p^{n-1}}-1\equiv(\gamma-1)^{p^n-p^{n-1}} \mod p.
    \end{equation*}
    This shows $\mu(\xi_n)=0$ and $\lambda(\xi_n)=p^n-p^{n-1}$.
\end{proof}

\begin{lemma}\label{l3}
    Fix $P\in \Lambda$ and let $P_n$ denote the image of $P$ in $\mathcal{O}[G_n]$. Then for $n\gg 0$, we have $\mu(P)=\mu(P_n)$ and $\lambda(P)=\lambda(P_n)$.
\end{lemma}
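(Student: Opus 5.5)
We identify $\mathcal{O}[G_n]$ with $\mathcal{O}[[T]]/(\omega_n)$, where $\gamma\mapsto 1+T$. Under this identification $P_n$ is the class of $P$ modulo $\omega_n$. We also have $I_n=(T)$ modulo $\varpi$, and $\mathbb{F}[G_n]\cong \mathbb{F}[T]/(T^{p^n})$, since $\omega_n\equiv T^{p^n}\bmod \varpi$.

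One point needs care first. The finite-level $\mu$ is defined through the coefficients $c_\sigma$ in the group basis. The $\Lambda$-level $\mu$ is defined through the coefficients $b_j$ in the $T$-basis. These two bases of $\mathcal{O}[G_n]$ differ by a unipotent integral change of basis: $\{(\gamma-1)^j\}_{0\le j<p^n}$ versus $\{\gamma^j\}$. So the minimal valuation of the coefficients is the same in either basis, and we may compute $\mu(P_n)$ using the reduced representative of $P$ modulo $\omega_n$ in the basis $1,T,\dots,T^{p^n-1}$.

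\textbf{Step 1 (the case $\mu(P)=0$).} By Weierstrass preparation we write $P=u\cdot g$ with $u\in\Lambda^\times$ and $g$ a distinguished polynomial of degree $\lambda=\lambda(P)$. Modulo $\varpi$ this gives $\bar P=\bar u\,T^{\lambda}$ with $\bar u(0)\ne0$. Choose $n$ with $p^n>\lambda$. Then the image of $\bar P$ in $\mathbb{F}[T]/(T^{p^n})$ is nonzero, so $\mu(P_n)=0$. Moreover this image lies in $(T)^{\lambda}$ and not in $(T)^{\lambda+1}$, because $\bar u$ is a unit. Hence $\lambda(P_n)=\lambda$.

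\textbf{Step 2 (the general case).} Write $P=\varpi^{m}Q$ with $m=\mu(P)$ and $\mu(Q)=0$, where valuations are normalized by the uniformizer. Then $P_n=\varpi^m Q_n$. By Step 1, for $n\gg0$ the element $Q_n$ has $\mu(Q_n)=0$, so some coefficient of $Q_n$ is a unit. Therefore $\mu(P_n)=m$ and $\varpi^{-\mu(P_n)}P_n=Q_n$, which gives $\lambda(P_n)=\lambda(Q_n)=\lambda(Q)=\lambda(P)$. If $P=0$, the statement is vacuous or holds by convention, with both sides infinite.

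The main obstacle is the basis-comparison remark above. Everything else is routine once we know that $\mu$ can be read off in the $T$-basis of the quotient, and that $\lambda$ is the $T$-adic order of the reduction in $\mathbb{F}[T]/(T^{p^n})$. The condition $n\gg0$ enters only through $p^n>\lambda(P)$.
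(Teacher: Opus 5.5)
Your argument is correct, and it is essentially the standard proof of \cite[Lemma~3.1]{PW}; the paper gives no argument of its own and only cites that lemma. The proof consists of identifying $\mathbb{F}[G_n]$ with $\mathbb{F}[T]/(T^{p^n})$ so that $I_n$ becomes $(T)$, writing $\bar P=T^{\lambda(P)}\cdot(\text{unit})$ with $p^n>\lambda(P)$, and then factoring out $\varpi^{\mu(P)}$. Your basis-comparison remark is valid but not needed, and neither is Weierstrass preparation: $\mu(\theta)=0$ if and only if $\bar\theta\neq 0$ in $\mathbb{F}[G_n]$, which does not depend on a basis, and $\bar P=T^{\lambda(P)}\cdot(\text{unit})$ follows directly from the definition of $\lambda(P)$.
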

\begin{proof} See \cite[Lemma. 3.1]{PW}.

\end{proof}

\begin{lemma}\label{lm:3.5}
   Let $P\in \Lambda$ and $Q\in \cO[T]$, and suppose that $P\equiv Q\mod{(\varpi, \omega_n)}$. If $\deg(Q)<p^n$, $\lambda(P)<p^n$ for $n\ge 0$, and $\mu(P)=0$,  then the Iwasawa invariants of $P$ and $Q$ agrees. 
\end{lemma}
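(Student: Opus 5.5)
We reduce everything modulo $\varpi$ and work in the quotient ring $\mathbb{F}[[T]]/(\omega_n)$. Here $\mathbb{F}$ is the residue field. Since $\mathbb{F}$ has characteristic $p$, we have $\omega_n=(1+T)^{p^n}-1\equiv T^{p^n}\pmod{\varpi}$. Hence
\[
\mathbb{F}[[T]]/(\bar\omega_n)=\mathbb{F}[T]/(T^{p^n}),
\]
and the hypothesis $P\equiv Q\pmod{(\varpi,\omega_n)}$ becomes the congruence $\bar P\equiv \bar Q\pmod{T^{p^n}}$ in $\mathbb{F}[[T]]$. Here $\bar P,\bar Q$ denote the reductions of the power series modulo $\varpi$.

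First, consider $P$. Since $\mu(P)=0$, the reduction $\bar P$ is nonzero. By the definition of $\lambda$ (the least index of a unit coefficient), we have
\[
\bar P = T^{\lambda(P)}u(T), \qquad u(0)\neq 0,
\]
and no coefficient of index below $\lambda(P)$ is a unit.

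Next, compare the first $p^n$ coefficients. The congruence modulo $T^{p^n}$ says that the coefficients of $\bar P$ and $\bar Q$ in degrees $0,\dots,p^n-1$ agree. Because $\lambda(P)<p^n$, the coefficient of $T^{\lambda(P)}$ in $\bar Q$ is nonzero, and all lower coefficients of $\bar Q$ vanish. So $Q$ has a unit coefficient, which gives $\mu(Q)=0=\mu(P)$. The lowest-index unit coefficient of $Q$ also sits at $\lambda(P)$, so $\lambda(Q)=\lambda(P)$.

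The hypothesis $\deg Q<p^n$ ensures that $\bar Q$ is entirely determined by its class modulo $T^{p^n}$. This means no information about $Q$ is lost in the comparison. It is not actually needed for the minimum-index argument, but it makes the identification of $Q$ with its image in $\mathbb{F}[T]/(T^{p^n})$ exact.

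The only point requiring care is the identification $\omega_n\equiv T^{p^n}\pmod{\varpi}$. This holds because the binomial coefficients $\binom{p^n}{i}$ with $0<i<p^n$ are divisible by $p$, and $p\in\varpi\cO$. This step is routine, so there is no real obstacle. One could alternatively phrase the argument via Lemma \ref{l3}, comparing $P$ and $Q$ through their common image in $\cO[G_n]/\varpi$, where $I_n$-adic order corresponds to $T$-adic order.
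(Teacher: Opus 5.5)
Your proof is correct and follows essentially the same approach as the paper, which simply cites \cite[Lemma~2.8]{gajek-lei}: reduce modulo $\varpi$, use $\omega_n\equiv T^{p^n}$, and note that $\mu(P)=0$ and $\lambda(P)<p^n$ force the first unit coefficient of $Q$ to occur at index $\lambda(P)$. Your side remark is also accurate: the hypothesis $\deg(Q)<p^n$ is not needed for the $\Lambda$-invariants, and only matters when identifying them with the finite-level invariants of $Q$ viewed as an element of $\cO[G_n]$.
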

\begin{proof} See \cite[Lemma~2.8]{gajek-lei}.

\end{proof}

\begin{lemma}\label{l4}
    For $\theta\in \mathcal{O}[G_{n-1}]$ and $g\in\mathcal{O}[G_{n}]$ we have 
    \begin{enumerate}
        \item $\mu(\pi_n^{n-1}(g))\geq \mu(g)$,
        \item $\mu(\xi_n(\theta))=\mu(\theta)$,
        \item $\lambda(\xi_n(\theta))=p^n-p^{n-1}+\lambda(\theta)$.
    \end{enumerate}
\end{lemma}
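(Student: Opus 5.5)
We read $\xi_n(\theta)$ as $\nu_{n-1}^n(\theta)$. By Lemma~\ref{l2}(3), $\nu_{n-1}^n(\theta)=\xi_n\cdot\tilde\theta$ for any lift $\tilde\theta\in\Lambda_n$ of $\theta$, i.e.\ any $\tilde\theta$ with $\pi_{n-1}^n(\tilde\theta)=\theta$. Parts (1) and (2) follow directly from the definitions, by comparing coefficients. Part (3) is a computation in $\mathbb{F}[G_n]\cong \mathbb{F}[T]/(T^{p^n})$ using Lemma~\ref{xi}.

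For (1), write $g=\sum_{\tau\in G_n}c_\tau\tau$. The coefficient of $\sigma\in G_{n-1}$ in $\pi_{n-1}^n(g)$ is $\sum_{\tau\mapsto\sigma}c_\tau$. This is a sum of elements of valuation at least $\mu(g)$, so its valuation is at least $\mu(g)$. Taking the minimum over $\sigma$ gives (1).

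For (2), write $\theta=\sum_{\sigma\in G_{n-1}}c_\sigma\sigma$. Then
\[
\nu_{n-1}^n(\theta)=\sum_{\sigma}c_\sigma\sum_{\tau\mapsto\sigma}\tau .
\]
The fibres of $G_n\to G_{n-1}$ are disjoint and cover $G_n$, so the coefficient of $\tau$ in $\nu_{n-1}^n(\theta)$ is exactly $c_{\pi(\tau)}$. The multiset of coefficient valuations therefore agrees with that of $\theta$, each value repeated $p$ times. Hence $\mu(\nu_{n-1}^n(\theta))=\mu(\theta)$.

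For (3), set $\mu=\mu(\theta)$ and $\theta'=\pi^{-\mu}\theta$. By (2) and linearity, $\pi^{-\mu}\nu_{n-1}^n(\theta)=\nu_{n-1}^n(\theta')=\xi_n\tilde\theta'$, where $\tilde\theta'$ is a lift of $\theta'$. So the $\lambda$-invariant of $\nu_{n-1}^n(\theta)$ is $\ord_{I_n}$ of the reduction of $\xi_n\tilde\theta'$.

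Identify $\mathbb{F}[G_n]=\mathbb{F}[T]/(T^{p^n})$ with $T=\gamma-1$, and $\mathbb{F}[G_{n-1}]=\mathbb{F}[T]/(T^{p^{n-1}})$ compatibly, using the image of $\gamma$ as generator. Put $\lambda=\lambda(\theta)<p^{n-1}$. Then $\bar\theta'=T^{\lambda}u$ with $u$ a unit, so
\[
\overline{\tilde\theta'}=T^{\lambda}u+T^{p^{n-1}}h \quad\text{in } \mathbb{F}[T]/(T^{p^n})
\]
for some $h$. By Lemma~\ref{xi}, $\bar\xi_n=T^{p^n-p^{n-1}}$. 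Multiplying, the term $T^{p^n-p^{n-1}}\cdot T^{p^{n-1}}h=T^{p^n}h$ vanishes, so
\[
\overline{\xi_n\tilde\theta'}=T^{p^n-p^{n-1}+\lambda}u .
\]
Since $p^n-p^{n-1}+\lambda<p^n$ and $u$ is a unit, this element is nonzero with $I_n$-order exactly $p^n-p^{n-1}+\lambda$. This proves (3).

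The one delicate point is that we cannot simply invoke Lemma~\ref{l1}(2): $\theta$ and $\xi_n$ live in different group rings, and the lift $\tilde\theta'$ is not canonical. The explicit truncation argument above handles both issues. In particular, it shows the answer is independent of the chosen lift, because the ambiguity $T^{p^{n-1}}h$ is killed by $\bar\xi_n$.
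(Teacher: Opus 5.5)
Your proof is correct. Part (1) is the same as in the paper.

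For (2) you take a different and cleaner route. You read off that the coefficient of $\tau$ in $\nu_{n-1}^n(\theta)$ is $c_{\pi(\tau)}$, which gives the equality of $\mu$-invariants immediately. The paper instead reduces to $\mu(\theta)=0$, lifts $\theta$ to some $g$ with $\mu(g)=0$, and writes $\xi_n(\theta)=\xi_n g$ via Lemma~\ref{l2}. It then asserts $\mu(\xi_n g)=\mu(g)$. However, Lemma~\ref{l1}(1) only gives $\geq$ there. The unstated point is that $\bar{\xi}_n\bar{g}\neq 0$, i.e.\ $\lambda(g)<p^{n-1}$, which holds because $\bar{\theta}\neq 0$. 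Your coefficient argument sidesteps this entirely.

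For (3) your argument is the paper's in substance. The paper lifts $\theta$ and applies Lemma~\ref{l1}(2). This is legitimate: after lifting, both $\xi_n$ and $g$ lie in $\Lambda_n$, and (2) gives $\mu(\xi_n g)=0$. It then uses $\lambda(\xi_n)=p^n-p^{n-1}$ from Lemma~\ref{xi}, together with $\lambda(g)=\lambda(\theta)$. That last fact is the content of the later Lemma~\ref{l5}(2), and it is misprinted in the paper as $\lambda(\pi_n^{n-1}(\theta))$. Your explicit computation $\overline{\xi_n\tilde\theta'}=T^{p^n-p^{n-1}+\lambda}u$ in $\mathbb{F}[T]/(T^{p^n})$ verifies all of these facts at once.

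The ``delicate point'' you flag is therefore not really an obstruction to the paper's route. Lift-independence is already guaranteed by Lemma~\ref{l2}(3), and the two factors do live in the same ring once you lift. Still, your version is self-contained and does not depend on Lemma~\ref{l5}, which the paper only proves afterwards.
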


\begin{proof} The first part is clear from the definition. For the second part suppose $\theta=p^{\mu(\theta)}\theta'$ with $\mu(\theta')=0$. This implies $\xi_n(\theta)=p^{\mu(\theta)}\xi_n(\theta')$. Thus $\mu(\xi_n(\theta))=\mu(\theta)$ if $\mu(\xi_n(\theta'))=0$. This is reduced to the case where $\mu(\theta)=0$. Now let us choose $g\in \mathcal{O}[G_n]$ such that $\pi_n^{n-1}(g)=\theta$. It is to be noted that $\mu(g)=0$ by $(1)$. Thus we have 
\begin{equation*}
    \xi_n(\theta)=\xi_n(\pi_n^{n-1}(g))=\xi_n\cdot g
\end{equation*}
    by lemma \ref{l2}. So we have 
    $\mu(\xi_n(\theta))=\mu(\xi_n\cdot g)=\mu(g)=0=\mu(\theta)$.
    For the proof of $(3)$, we may assume that $\mu(\theta)=0$. Then let us choose $g\in \Lambda_n$ lifting $\theta$ and thus we have $\lambda(\xi_n(\theta))=\lambda(\xi_n\cdot g)=\lambda(g)+\lambda(\xi_n)$, by $(2)$ of Lemma \ref{l1}. This implies 
    \begin{equation*}
        \lambda(g)+\lambda(\xi_n)=p^n-p^{n-1}+\lambda(\pi_n^{n-1}(\theta))=p^n-p^{n-1}+\lambda(\theta).
    \end{equation*}
\end{proof}

\begin{lemma}\label{l5}
    Fix $\theta\in \mathcal{O}[G_n]$.
    \begin{enumerate}
        \item If $\mu(\pi_n^{n-1}(\theta))=0$, then $\mu(\theta)=0$.
        \item If $\mu(\theta)=\mu(\pi_n^{n-1}(\theta))$, then $\lambda(\pi_n^{n-1}(\theta))=\lambda(\theta)$.
    \end{enumerate}
\end{lemma}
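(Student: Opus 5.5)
The plan is to work directly from the definitions, using Lemma \ref{l4}(1) for the inequality $\mu(\pi_n^{n-1}(\theta))\ge\mu(\theta)$ and two basic facts about reduction mod $\varpi$: the reduction $\mathbb{F}[G_n]\to\mathbb{F}[G_{n-1}]$ commutes with $\pi_n^{n-1}$, and it sends the augmentation ideal $I_n$ onto $I_{n-1}$.

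For part (1), suppose $\mu(\pi_n^{n-1}(\theta))=0$. Lemma \ref{l4}(1) gives $0=\mu(\pi_n^{n-1}(\theta))\ge\mu(\theta)\ge 0$, so $\mu(\theta)=0$. (Directly: if every coefficient of $\theta$ were divisible by $\varpi$, so would be every coefficient of its image, since each image coefficient is a sum of coefficients of $\theta$.)

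For part (2), write $\mu=\mu(\theta)=\mu(\pi_n^{n-1}(\theta))$ and set $\theta'=\varpi^{-\mu}\theta$. Then $\pi_n^{n-1}(\theta')=\varpi^{-\mu}\pi_n^{n-1}(\theta)$, and both $\theta'$ and its image have $\mu=0$. This reduces the claim to the case $\mu=0$ at both levels, so that $\bar\theta\in\mathbb{F}[G_n]$ and $\overline{\pi_n^{n-1}(\theta)}\in\mathbb{F}[G_{n-1}]$ are both nonzero.

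Now identify $\mathbb{F}[G_n]\cong\mathbb{F}[X]/(X^{p^n})$ with $X=\gamma-1$, using $\gamma^{p^n}-1\equiv(\gamma-1)^{p^n}$ mod $p$. Under this identification $\pi_n^{n-1}$ becomes the reduction $\mathbb{F}[X]/(X^{p^n})\to\mathbb{F}[X]/(X^{p^{n-1}})$, and $I_n^j=(X^j)$. Hence $\lambda(\theta)$ is the $X$-adic valuation $j$ of $\bar\theta$, where $j<p^n$ because $\bar\theta\neq0$. Its image is $X^j u$ mod $X^{p^{n-1}}$ with $u$ a unit. Since the image is nonzero by the $\mu$ hypothesis, we must have $j<p^{n-1}$, and then the image has valuation exactly $j$. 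So $\lambda(\pi_n^{n-1}(\theta))=\lambda(\theta)$.

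The only point needing care is the reduction step, specifically checking that the hypothesis $\mu(\theta)=\mu(\pi_n^{n-1}(\theta))$ is what makes the image of $\bar\theta$ nonzero. That nonvanishing is exactly what forces $j<p^{n-1}$, so that the truncation does not change the valuation.
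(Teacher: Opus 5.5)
Your proof is correct and follows the same route as the paper: part (1) comes from the inequality $\mu(\pi_n^{n-1}(\theta))\ge\mu(\theta)$, and part (2) compares $I$-adic valuations, using that the augmentation ideals of $\mathbb{F}[G_n]$ and $\mathbb{F}[G_{n-1}]$ are principal and generated by $\gamma-1$. Your version is slightly more careful: the paper's claim that $\bar\theta\in I_n^a$ if and only if $\overline{\pi_n^{n-1}(\theta)}\in I_{n-1}^a$ holds only for $a\le p^{n-1}$ (beyond that $I_{n-1}^a=0$), and your observation that the equal-$\mu$ hypothesis makes the image nonzero, forcing $\lambda(\theta)<p^{n-1}$, is exactly what justifies that step.
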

\begin{proof} Part $(1)$ follows from the fact that $\mu(\pi_n^{n-1}(\theta))\geq \mu(\theta)$. For $(2)$ we know $\theta\mod p\in I_n^a$ if and only if $\pi_n^{n-1}(\theta)\mod p\in I_{n-1}^a$, since these augmentation ideals are principal. This implies $\lambda(\pi_n^{n-1}(\theta))=\lambda(\theta)$ since the $\mu$-invariants of both these elements are same.
    
\end{proof}


\section{Gross points and Bertolini--Darmon theta elements }\label{BD_ele}
In this section we recall the definition of Bertolini--Darmon theta elements as defined in \cite{Ber-Dar05} and \cite{Dar-Iov08}. We follow the notation and setup of \cite{BBL22}.  In the following we will first take the  approach of defining these theta elements using geometric Gross points and in section \ref{sec:Gross} we will recall the approach using explicit Gross point; both of these approaches are equivalent.

Let $p\ge 5$ be a fixed prime and $K$ be an imaginary quadratic number field.  Let $N$ be a positive integer coprime to $p$, write $N=N^+N^-$ where $N^+$ (resp. $N^-$) is divisible only by primes that are split (resp. are inert) in $K$. Throughout this section we assume that $N^-$ is square-free and is a product of an odd number of primes. 

Let $B$ be the definite quaternion algebra of discriminant $N^-$. Let $R$ be an Eichler $\Z[1/p]$-order of level $N^+$ in $B$ and $B_p:=B\otimes\Q_p$. We have a fixed isomorphism $i_p:B_p\longrightarrow M_2(\Q_p)$. 

We denote by $\mathcal{T}$ the Bruhat-Tits tree of $B_p^\times/\Q_p^\times$. Let $\mathcal{V}(\mathcal{T})$ be the set of vertices and $\vec{\mathcal{E}}(\mathcal{T})$ denotes the ordered edges of $\mathcal{T}$. Set $\mathbf{\Gamma}=R^\times/\Z[1/p]^\times$ and let $Z$ be a ring. Recall that the $Z$-valued weight two modular form on $\mathcal{T}/\mathbf{\Gamma}$ is a $Z$-valued function $h$ on $\vec{\mathcal{E}}(\mathcal{T})$ such that 
\[ 
h(\gamma e)=h(e) \qquad \text{for all } \gamma\in\mathbf{\Gamma} .
\]
We denote by $S_2(\mathcal{T}/\mathbf{\Gamma};Z)$ the space of $Z$-valued weight two modular forms on $\mathcal{T}/\mathbf{\Gamma}$. The following Proposition gives the correspondence between modular forms in $S_2(\mathcal{T}/\mathbf{\Gamma};Z)$ and $S_2(\Gamma_0(N), \C)$  as a consequence of Jacquet--Langlands correspondence, see \cite[Proposition~1.3]{Ber-Dar05} and \cite[Theorem~2.2, Proposition~2.3]{Dar-Iov08}.

\begin{Proposition}\label{prop:correspondence}
    Let $f\in S_2(\Gamma_0(N))$ be a newform, then there exists $h\in S_2(\mathcal{T}/\mathbf{\Gamma};\Z)$ such that $f$ and $h$ have the same Hecke eigenvalues at all primes $\ell\nmid N$. This form $h$ is unique up to multiplication by a non-zero scalar.
\end{Proposition}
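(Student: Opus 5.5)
The plan is to deduce the statement from the Jacquet--Langlands correspondence together with the strong multiplicity one theorem, and then to normalize the resulting form so that it takes integral values.

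\emph{Step 1: existence over $\C$.} Since $N^-$ is a squarefree product of an odd number of primes, the quaternion algebra $B$ of discriminant $N^-$ is definite. Because $f$ is new at every prime dividing $N^-$, the automorphic representation $\pi_f$ is discrete series (Steinberg) at those primes. Jacquet--Langlands then gives an automorphic representation $\pi'$ of $(B\otimes\mathbb{A})^\times$ with the same local components at all $\ell\nmid N^-$. Taking vectors fixed by the adelic Eichler order of level $N^+$ (a $\Z$-order) yields a function on the finite double coset space $B^\times\backslash \widehat B^\times/\widehat R_{\Z}^\times$. This function is a Hecke eigenform, with eigenvalues $a_\ell(f)$ for $\ell\nmid N$.

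\emph{Step 2: passage to the tree.} Next I would use strong approximation at $p$, which applies because $p\nmid N^-$ and so $B_p\cong M_2(\Qp)$. It identifies this double coset space, together with the level structure at $p$ given by the Iwahori (edge) stabilizer, with $\mathbf{\Gamma}\backslash\vec{\mathcal{E}}(\mathcal{T})$. The space of functions on $\mathbf{\Gamma}\backslash\vec{\mathcal{E}}(\mathcal{T})$ is thus identified with automorphic forms on $B$ of level $N^+$ and Iwahori level at $p$. The $p$-new part (harmonic cocycles) corresponds to forms $p$-new of level $pN$. Here $f$ has level prime to $p$, so the relevant eigenvector lives in the $p$-old part, generated from vertex functions via the two degeneracy maps. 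Since the paper's definition of $S_2(\mathcal{T}/\mathbf{\Gamma};Z)$ only requires $\mathbf{\Gamma}$-invariance, such a function is admissible. This produces $h\in S_2(\mathcal{T}/\mathbf{\Gamma};\C)$ with the same $T_\ell$-eigenvalues as $f$ for $\ell\nmid Np$. At $p$ I would use the $p$-stabilization in the style of \cite{Dar-Iov08}, choosing $h$ to be an eigenvector for $U_p$ or working with the vertex function, as needed.

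\emph{Step 3: uniqueness and integrality.} By strong multiplicity one for $f$, transported through the Jacquet--Langlands correspondence, the $\C$-eigenspace in $S_2(\mathcal{T}/\mathbf{\Gamma};\C)$ cut out by the eigenvalues $\{a_\ell\}_{\ell\nmid N}$ is one-dimensional. Hence $h$ is unique up to a nonzero scalar. The Hecke operators preserve the lattice of $\Z$-valued functions on the finite set $\mathbf{\Gamma}\backslash\vec{\mathcal{E}}$, so the eigenspace is defined over the coefficient field $\Q(a_\ell)$. For rational eigenvalues we can therefore scale $h$ to take values in $\Z$, and in general in $\cO$, which is what the later sections use. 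I would then normalize $h$ so that it is primitive, meaning not divisible by $\varpi$.

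The main obstacle is Step 2, and specifically the precise bookkeeping at $p$: it must be shown that the $\mathbf{\Gamma}$-invariant functions on edges realize exactly the Iwahori-level forms and that the eigenspace remains one-dimensional once the $U_p$-ambiguity between the two $p$-stabilizations is accounted for. I would follow \cite[Proposition~1.3]{Ber-Dar05} and \cite[Theorem~2.2, Proposition~2.3]{Dar-Iov08} for this rather than redo it.
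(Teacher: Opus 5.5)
Your ingredients (Jacquet--Langlands, strong approximation at $p$, strong multiplicity one, and rationality of the eigenline for the normalization) are what lie behind the results the paper cites. The paper gives no argument of its own beyond \cite[Proposition~1.3]{Ber-Dar05} and \cite[Theorem~2.2, Proposition~2.3]{Dar-Iov08}. Your remark that $h$ can in general only be made $\cO$-valued and primitive, not $\Z$-valued, is also correct.

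But your treatment at $p$ contains a genuine error. You read $S_2(\mathcal{T}/\mathbf{\Gamma};Z)$ literally as $\mathbf{\Gamma}$-invariant functions on ordered edges, i.e.\ Iwahori level at $p$. Since $p\nmid N$, the subspace on which $T_\ell$ acts by $a_\ell$ for all $\ell\nmid Np$ is then two-dimensional; it is isomorphic to the Iwahori-fixed vectors of the unramified representation $\pi_{f,p}$. The Hecke operator available at $p$ on edge functions is $U_p$, and it acts on this space with characteristic polynomial $X^2-a_pX+p$. So $a_p$ is never an eigenvalue, since that would force $p=0$. Hence the claim in your Step~3 that the eigenspace cut out by $\{a_\ell\}_{\ell\nmid N}$ is one-dimensional is false in this space, and the condition at $\ell=p$ in the statement cannot even be imposed.

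Choosing a $U_p$-eigenvector, as you suggest, does give a line, but its eigenvalue is $\lambda\in\{\alpha,\beta\}$ rather than $a_p$. That eigenvector is $e\mapsto h(t(e))-\lambda^{-1}h(s(e))$ for a vertex form $h$, where $s(e),t(e)$ are the source and target of $e$. Its theta elements are, up to the factor $\lambda^{n+1}$, the stabilized elements $\cL^{\lambda}_{h,n}$ of \S\ref{BD_ele}, not $\cL_{f,n}$. Moreover, this line is only defined over $\Q(a_\ell,\lambda)$. For an elliptic curve with $a_p=0$, as in Theorem~\ref{thm:C}, $\lambda=\pm\sqrt{-p}\notin\Qp$, so your integral normalization breaks as well.

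The fix is the option you mention only in passing, and it should be the main line: take $h$ to be a $\mathbf{\Gamma}$-invariant function on the vertices $\mathcal{V}(\mathcal{T})$. The edge-valued definition in the paper is inconsistent with how $h$ is actually used. The paper evaluates $h$ at the vertices $\sigma\star v_m$. The three-term relation \eqref{three-term-relation}, with coefficient $a_p(f)$, is exactly $\sum_{w\sim v_n}h(w)=a_p\,h(v_n)$, because for $n\ge 1$ the neighbours of $v_n$ are $v_{n-1}$ and the $U_n/U_{n+1}$-translates of $v_{n+1}$.

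Vertex stabilizers are maximal compact. Strong approximation for $B^1$ (using $p\nmid N^-$ and $\widehat{\Q}^\times=\Q^\times_{>0}\widehat{\Z}^\times$) then identifies $\mathbf{\Gamma}\backslash\mathcal{V}(\mathcal{T})$ with $B^\times\backslash\widehat{B}^\times/\widehat{R}_{\Z}^\times$ for the Eichler $\Z$-order of level $N^+$, and the adjacency operator becomes $T_p$. In this space the $\pi'$-isotypic part is one-dimensional, for three reasons:
\begin{itemize}
\item there are spherical vectors at every $\ell\nmid N$, including $\ell=p$;
\item local newform theory gives a unique line at $\ell\mid N^+$;
\item for $\ell\mid N^-$, $\pi'_\ell$ is an unramified character composed with the reduced norm, hence trivial on the units of the maximal order.
\end{itemize}
Strong multiplicity one, transported by Jacquet--Langlands, then gives uniqueness up to scalar, and your Step~3 normalization goes through verbatim. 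With this choice the ``main obstacle'' of your Step~2 simply does not arise.
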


  Let $\cK$ be the number field generated by the Fourier coefficients of $f$. We fix a prime $v$ of $\cK$ lying above $p$ and let $L$ be the completion of $\cK$ at $v$. Let $\cO$ be the ring of integers of $L$ and $\varpi$ be a fixed uniformizer of $\cO$. We now fix a newform $f\in S_2(\Gamma_0(N))$ of level $N$ and weight $2$. Using the above proposition, we can identify $f$ with an element $h\in S_2(\mathcal{T}/\mathbf{\Gamma};\cO)$, which is not divisible by $\varpi$. 

Fix an embedding $\Psi:K\longrightarrow B$ such that $\Psi(K)\cap R=\Psi(\cO_K[1/p]^\times)$. The group $\Pi_\infty:=K_p^\times/\Q_p^\times$ acts on $\mathcal{T}$ via $$\sigma\star x:=(i_p\circ\Psi)(\sigma)x$$ for any $\sigma\in\Pi_\infty$ and any vertex or edge $x$ of $\mathcal{T}$. Set $\widetilde{G}_\infty:=\Pi_\infty/v_p^{\mathbb{Z}}$, where $v_p$ is a fundamental unit of $K$. There is natural filtration of $\Pi_\infty$ as given in \cite[(2.2) and (2.3)]{Dar-Iov08}, 
\[
\cdots\subset U_n\subset \cdots \subset U_1\subset U_0\subset\Pi_\infty,
\]

\begin{remark}
We note that, if $p$ is inert in $K$ then the group $\Pi_\infty$ is compact. In this case, we have $U_0=\Pi_\infty$. On the other hand, if $p$ is split in $K$, then $\Pi_\infty$ is not compact and we have $U_0\subsetneq \Pi_\infty$, here $U_0$ is the maximal compact subgroup of $\Pi_\infty$, see \cite[\S~2.2]{Dar-Iov08}.
\end{remark} 
Let $\widetilde{G}_m:=\widetilde{G}_\infty/U_m.$ We choose a sequence of consecutive vertices $v_0, v_1, v_2, \cdots$ of $\mathcal{T}$ with coherent orientation and without backtracking such that $Stab(v_n)=U_n$ for all $n\ge 0$ (see, \cite[\S~3]{Kim19}). Each vertex $v_n$ is called the a \textit{(classical) Gross point} of level $0$ and the directed edge $e_n (=v_n\rightarrow v_{n+1})$ is called a \textit{(classical) Gross point} of level $1$.

For any $h\in S_2(\mathcal{T}/\mathbf{\Gamma};\cO_L)$, there is a sequence of functions $h_{K, m}:\widetilde{G}_m\longrightarrow \cO_L$ defined by $\sigma\mapsto h(\sigma\star v_m)$, where $v_m\in \mathcal{T}$ is choosen as in \cite{Dar-Iov08}. We now define the Bertolini--Darmon theta elements $\widetilde{\cL}_{f,m}\in \cO_L[\widetilde{G}_m]$ as follows: 
\begin{equation}\label{deh_bd_elt}
    \widetilde{\cL}_{f,m}:=\sum_{\sigma\in \widetilde{G}_m} h_{K, m}(\sigma)\cdot\sigma^{-1}.
\end{equation}
Let $\pi_{n}^{n+1}:\cO_L[\widetilde{G}_{n+1}]\longrightarrow\cO_L[\widetilde{G}_{n}]$ be the natural projection and $\widetilde{\xi}_n:\cO_L[\widetilde{G}_n]\longrightarrow\cO_L[\widetilde{G}_{n+1}]$ be the norm map as in \S \ref{Iwasawa-invar}. Then we have the following three term relation as given in \cite[Lemma~2.6]{Dar-Iov08}:
\begin{equation}\label{three-term-relation}
\pi_{n}^{n+1}(\widetilde{\cL}_{f,n+1})=a_p(f)\cdot\widetilde{\cL}_{f,n}-\widetilde{\xi}_{n-1}(\widetilde{\cL}_{f,n-1}).
\end{equation}
We define  $G_n:=\widetilde{G}_{n+1}/ \Delta$, where $\Delta$ is torsion subgroup of $\tilde{G}_{n+1}$. From the class field theory, the group $G_n$ is identified with $\Gal(K^{(n)}/K)$.
We denote $\cL_{f, n}$ the image of $\widetilde{\cL}_{f, n}$ under the natural projection $\cO_L[\widetilde{G}_{n+1}]\longrightarrow \cO_L[G_n]$. The elements $\cL_{f, n}$ are called the Bertolini--Darmon theta elements attached to $f$ over $K^{(n)}$. By a slight abuse of notation, we also denote the projection map $\cO_L[G_{n+1}]\longrightarrow \cO_L[G_n]$ by $\pi_{n}^{n+1}$ and the norm map $\cO_L[G_n]\longrightarrow \cO_L[G_{n+1}]$ by $\xi_n$. Then the three term relation in terms of $\cL_{f, n}$ is as follows:
\begin{equation}\label{three-term}
\pi_{n}^{n+1}(\cL_{f,n+1})=a_p(f)\cdot\cL_{f,n}-\xi_{n-1} (\cL_{f,n-1}).
\end{equation}
Let $\alpha$ and $\beta$ are roots of the Hecke polynomial $X^2-a_p(h)X+p$. For any $\lambda\in \set{\alpha, \beta}$, the \emph{$\lambda$-stabilized Bertolini--Darmon element} is defined by
\[\cL_{h, n}^\lambda:=\frac{1}{\lambda^{n+1}}\left(\cL_{h, n}-\frac{1}{\lambda}\xi_{n-1}\cL_{h, n-1}\right). \]
 From the three-term relation \eqref{three-term} for $\cL_{h, n}$, it follows that $\pi_n^{n+1}(\cL_{h, n+1}^\lambda)=\cL_{h, n}^\lambda$ for $n\geq 0$. The sequence $\{\cL_{h, n}^\lambda\}_{n\geq 0}$ is compatible with the natural projections $\pi_n^{n+1}$. This sequence converges to an element $\cL_h^\lambda=\varprojlim \cL_{h, n}^\lambda\in \mathcal{H}[\Gamma]$, where $\mathcal{H}(\Gamma)$ denotes the set of power series in $\cK[[T]]$ that converges in the open unit disk.  We will use these $\lambda$-stabilized Bertolini--Darmon elements to define the $p$-adic $L$-function attached to $f$ in the anticyclotomic setting when $f$ is ordinary at $p$.
\subsection{Explicit Gross points}\label{sec:Gross}
Let $K$ be an imaginary quadratic number field with odd discriminant $-D_K<-4$ and define
$$\vartheta := 
\left \lbrace
    \begin{array}{ll}
     \dfrac{ D_K - \sqrt{-D_K} }{ 2 }  & \textrm{ if } 2 \nmid D_K \\ 
\dfrac{ D_K - 2\sqrt{-D_K} }{ 4 }  & \textrm{ if } 2 \mid D_K
    \end{array}
    \right.$$
Therefore $\cO_K=\Z+\Z\vartheta$. Let $\set{1, J}$ be a $K$-basis of $B$ so that $B=K\oplus K\cdot J$ such that 
\begin{enumerate}
\item $\beta := J^2 \in \mathbb{Q}^\times$ with $\beta <0$,
\item $J \cdot t = \overline{t} \cdot J$ for all $t \in K$,
\item $\beta \in \left( \mathbb{Z}^\times_q \right)^2$ for all $q \mid pN^+$,
\item $\beta \in \mathbb{Z}^\times_q$ for all $q \mid D_K$.
\end{enumerate}
For $q\mid N^+p$, let $i_q:B_q\rightarrow M_2(\Q_q)$ be the isomorphism defined in \cite[\S~2.1]{Kim19}. 
We define the local Gross point $\varsigma_q \in B^\times_q$ for any rational prime $q$.
\begin{enumerate}
    \item Let $q \nmid N^+p$ be a prime. Then $\varsigma_q := 1$ in $B^\times_q$.
\item Let $q \mid N^+$ be a prime, and write $q = \mathfrak{q} \overline{\mathfrak{q}}$ in $\mathcal{O}_K$.
Then
$$\varsigma_q := \frac{1}{\sqrt{D_K}}\cdot \left( \begin{matrix}
\vartheta & \overline{\vartheta} \\
1 & 1
\end{matrix} \right) \in \mathrm{GL}_2(K_\mathfrak{q}) = \mathrm{GL}_2(\mathbb{Q}_q) .$$ 
\item Let $q=p$ and suppose that $p = \mathfrak{p}\overline{\mathfrak{p}}$ splits in $K$. Then we put
$$\varsigma^{(n)}_p = \left( \begin{matrix}
\vartheta & -1 \\
1 & 0
\end{matrix} \right)
\left( \begin{matrix}
p^n & 0 \\
0 & 1
\end{matrix} \right)
 \in \mathrm{GL}_2(K_\mathfrak{p}) = \mathrm{GL}_2( \mathbb{Q}_{p} ).$$
\end{enumerate}
Suppose that $p$ is inert in $K$. Then we put
$$\varsigma^{(n)}_p = \left( \begin{matrix}
0 & 1 \\
-1 & 0
\end{matrix} \right)
\left( \begin{matrix}
p^n & 0 \\
0 & 1
\end{matrix} \right)
 \in \mathrm{GL}_2(K_p) = \mathrm{GL}_2( \mathbb{Q}_{p^2} ).$$

Now the the \textit{explicit Gross point} $\varsigma^{(n)}$ of conductor $p^n$ on $\widehat{B}^\times$ is defined as follows:
$$\varsigma^{(n)} := \varsigma^{(n)}_p \times \prod_{q \neq p} \varsigma_q \in \widehat{B}^\times .$$
\begin{remark}
    By the construction of Gross points and theta elements in \cite[\S~4.1]{ChidaHsieh2018}, the explicit Gross points $\varsigma^{(n)}$ coincide with the classical Gross points $v_n$   for the choice $\Psi:K\longrightarrow B$ (see \cite[\S~4.1]{Kim19}).
\end{remark}

\section{Ordinary case}\label{ord-case}
\subsection{$p$-adic $L$-function} 

In this section we recall the definition of $p$-adic $L$-function following \cite{Kim26}.
Let us assume that $\alpha,\beta$ be the roots of the Hecke polynomial $X^2-a_p(f)X+p$ of $f$ at $p$. If $f$ is ordinary at $p$, one of them say $\alpha$ is a $p$-adic unit. Assume that the residual Galois representation 
$\overline{\rho}_f$ attached to $f$ is irreducible. The $p$-stabilization $f_\alpha$ of $f$ is defined by 
\begin{equation*}
    f_\alpha(z)=f(z)-\beta\cdot f(pz).
\end{equation*}
The Bertolini--Darmon theta element of $f_\alpha$ over $K^{(n)}$ is characterized by the following relation
\begin{equation}\label{eq:rel}
    \cL_{f_\alpha,n}=\frac{1}{\alpha^n}(\cL_{f,n}-\frac{1}{\alpha}\cdot\xi_{n-1}\cL_{f,n-1}).
\end{equation}
By the three-term relation \eqref{three-term} we have 
\begin{equation*}
    \pi_{n+1}^n(\cL_{f_\alpha,n+1})=\cL_{f_\alpha,n}.
\end{equation*}
This tells us that the theta elements of $f_\alpha$ satisfy the norm compatibility relation.  
Then the anticyclotomic $p$-adic $L$-function of $f$ is defined by taking projective limit;  
\begin{equation*}
    L_p(f/K_\infty)=\varprojlim_n (\cL_{f_\alpha,n}\cdot \iota(\cL_{f_\alpha,n}))\in \Lambda,
\end{equation*}
where $\iota$ be the involution on $\Lambda_n$ sending $\gamma$ to $\gamma^{-1}$.
\begin{remark}\label{rmk:1}
    Recall that the involution map $\iota:\cO[[T]]\longrightarrow \cO[[T]]$ is defined by $1+T\mapsto (1+T)^{-1}$, and hence
$$
\iota(T)=\left(\frac{-1}{1+T}\right)T.
$$
Since $\frac{-1}{1+T}$ is a unit in $\cO[[T]]$, it follows that $\iota$ is an isomorphism. Hence, for any $P\in \cO[[T]]$, the Iwasawa invariants of $P$ and $P^\iota:=\iota(P)$ are the same.
\end{remark}

\subsection{Iwasawa invariants in the \texorpdfstring{$p$}{p}-ordinary case.}
Let $f$ continue to be a $p$-ordinary eigenform on $\Gamma=\Gal(K_\infty/K)$. 
By Lemma \ref{l3}, we have
\begin{equation}\label{invariants}
    2\mu(\cL_{f_\alpha,n})=\mu(L_p(f/K_\infty)) \quad \text{and} \quad 2\lambda(\cL_{f_\alpha,n})=\lambda(L_p(f/K_\infty)).
\end{equation}
Now we have the following theorem.
\begin{theorem}\label{main-thm-ord}
    Assume that the residual Galois representation 
    $\overline{\rho}_f$ attached to $f$ is irreducible and assume the hypotheses (\ref{disc}) and (\ref{def}). 
     Then for $n\gg 0$, we have 
    \begin{enumerate}
        \item $\mu(\cL_{f,n})=0$,
        \item $2\lambda(\cL_{f,n})=\lambda(L_p(f/K_\infty))$.
    \end{enumerate}
\end{theorem}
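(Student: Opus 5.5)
Following \cite[Proposition~3.7]{PW}, I would compare $\cL_{f,n}$ with the $p$-stabilized element $\cL_{f_\alpha,n}$, whose invariants are already known from \eqref{invariants}.

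The first input is $\mu(L_p(f/K_\infty))=0$. Since $\overline{\rho}_f$ is irreducible and (\ref{disc}), (\ref{def}) hold, I would take this from the known vanishing of the anticyclotomic $\mu$-invariant for definite quaternionic theta elements: the work of Vatsal and of Chida--Hsieh, as used in \cite{Kim26}. By \eqref{invariants} this gives $\mu(\cL_{f_\alpha,n})=0$ for $n\gg0$. By Lemma \ref{l3} together with Remark \ref{rmk:1}, the element $\cL_{f_\alpha,n}$ is then a $\Lambda_n$-unit multiple of the image of a fixed power series with $\mu=0$ and fixed $\lambda$-invariant $\lambda_\alpha:=\tfrac12\lambda(L_p(f/K_\infty))$. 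In particular $\lambda(\cL_{f_\alpha,n})=\lambda_\alpha$ is bounded independently of $n$.

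Next I would invert the relation \eqref{eq:rel}. Multiplying by $\alpha^n$ gives
\[
\alpha^n\cL_{f_\alpha,n}=\cL_{f,n}-\alpha^{-1}\xi_{n-1}\cL_{f,n-1},
\]
and since $\alpha$ is a unit this shows $\cL_{f,n}\equiv \alpha^n\cL_{f_\alpha,n}+\alpha^{-1}\xi_{n-1}\cL_{f,n-1}$. By Lemma \ref{l4}, the second term has $\lambda$-invariant $p^n-p^{n-1}+\lambda(\cL_{f,n-1})$ whenever $\mu(\cL_{f,n-1})=0$. In every case the second term lies in $\xi_{n-1}\Lambda_n$, and modulo $\varpi$ we have $\xi_{n-1}\equiv (\gamma-1)^{p^n-p^{n-1}}$, which lies in $I_n^{p^n-p^{n-1}}$. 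The first term reduces mod $\varpi$ to a nonzero element of exact $I_n$-order $\lambda_\alpha<p^n-p^{n-1}$ once $n$ is large. The sum therefore reduces mod $\varpi$ to a nonzero element of $I_n$-order exactly $\lambda_\alpha$. This yields $\mu(\cL_{f,n})=0$ and $\lambda(\cL_{f,n})=\lambda_\alpha$, which is statement (2) after doubling.

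The step needing the most care is that $\xi_{n-1}\cL_{f,n-1}$ lies in $I_n^{p^n-p^{n-1}}$ mod $\varpi$ regardless of the $\mu$-invariant of $\cL_{f,n-1}$. This is immediate once $\xi_{n-1}$ is viewed as multiplication by a lift, as in Lemma \ref{l2}(3). A second point is to keep track of the indexing $G_n$ versus $\widetilde G_{n+1}$, so that the norm element has $\lambda$-invariant of size $\asymp p^n$. The other real input is the vanishing of $\mu(L_p(f/K_\infty))$. This is where irreducibility of $\overline{\rho}_f$ enters, and it is not proved in this paper, so I would cite it.
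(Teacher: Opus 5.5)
Your proposal is correct and follows essentially the same route as the paper. Both cite the vanishing of $\mu(L_p(f/K_\infty))$ to get $\mu(\cL_{f_\alpha,n})=0$ with bounded $\lambda$, then invert \eqref{eq:rel} and use that the norm term $\xi_{n-1}\cL_{f,n-1}$ lies in $I_n^{p^n-p^{n-1}}$ modulo $\varpi$. Your single mod-$\varpi$ comparison does not need $\mu(\cL_{f,n-1})=0$ and gives $\mu(\cL_{f,n})=0$ and $\lambda(\cL_{f,n})=\lambda(\cL_{f_\alpha,n})$ at once, which is slightly cleaner than the paper's separate $\mu$-argument via the three-term relation \eqref{three-term}.
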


\begin{proof} 

The proof is based on \cite[Prop. 3.7]{PW} which uses Ihara's lemma \cite[Theorem 3.5]{PW}. 

By the work of Vatsal \cite{V} it is known that the $p$-adic $L$-function $L_p(f/K_\infty)$ is non-zero and  if $\overline{\rho}_f$ is irreducible, then one obtains that $\mu(L_p(f/K_\infty))=0$; see \cite[Theorem 2.3]{PW2011} for weight $2$ forms. 
This vanishing of $\mu$-invariant result is also known for weight $k$ (even) forms under some addition hypotheses, see \cite[Theorem 5.7]{ChidaHsieh2018}, the proof of which  uses an analogue of Ihara's lemma \cite[page 119, line 3]{ChidaHsieh2018} (see also  \cite[Theorem 6.8]{Manning_Shotton}). 

We will see from the argument below that the fact that $\mu(L_p(f/K_\infty))=0$ will be sufficient to deduce Theorem \ref{main-thm-ord}.

From \eqref{eq:rel}, we obtain
\begin{equation}\label{pstab}
    \cL_{f_\alpha,n}=\frac{1}{\alpha^n}(\cL_{f,n}-\frac{1}{\alpha}\xi_{n-1}\cL_{f,n-1}).
\end{equation}
By Lemma \ref{l3} we have $\mu(\cL_{f_\alpha,n})=0$ for $n\gg0$. 
Since $\alpha$ is a $p$-adic unit, it follows from  \eqref{pstab} that $\mu(\cL_{f,n})=0$, for sufficiently large $n$.
By Lemma \ref{l4} we have,
\begin{equation*}
    \lambda(\xi_{n-1} \cL_{f,n-1})\geq p^n-p^{n-1},
\end{equation*}
thus for large enough $n$ we have,
\begin{equation*}
    \lambda(\cL_{f_\alpha,n})<\lambda(\xi_{n-1} \cL_{f,n-1}).
\end{equation*}
Therefore, we obtain 
\begin{equation*}
\lambda(\cL_{f,n})=\lambda(\cL_{f_\alpha,n}).    
\end{equation*}

This shows that 
\begin{equation*}
   2\lambda(\cL_{f,n})=\lambda(L_p(f/K_\infty)). 
\end{equation*}

Now we know $\lambda(\cL_{f,n})\neq \lambda(\xi_{n-1} \cL_{f,n-1})$. This implies that the reduction of these two elements are not equal. Hence we have $\mu(\pi_{n+1}^n(\cL_{f,n+1}))=0$ by \eqref{three-term}. This implies $\mu(\cL_{f,n+1})=0$. This shows that $\mu(\cL_{f,n})=0$ for $n\gg0$. This completes the proof.

\end{proof}

\section{The non-ordinary case}\label{non-ord}
\subsection{Split case}

In this subsection, we assume that $N^-$ is square-free product of an odd number of primes, that $p$ is split in $K$. We begin by recalling the construction of logarithm matrices from \cite[\S~3.2]{BBL22}, which is based on \cite{spr12, spr17}.

\subsection{Construction of Sprung-type matrices}


For any integer $n\geq 0$, we write $\omega_n=\gamma^{p^n}-1\in \Lambda$ and  we denote the ring $\Lambda/(\omega_n)$ by $\Lambda_n$. Note that $\Lambda_n$ is isomorphic to $\cO[G_n]$ via the natural projection map. For $n\ge 1$, let $\Phi_n:=\frac{\omega_n}{\omega_{n-1}}$ be the $p^n$-th cyclotomic polynomial in the variable $\gamma$. For $n\ge0$, define the polynomials
\begin{equation*}
    \omega_n^+=\prod_{1\le j\le n \text{ even}} \Phi_j(\gamma), \quad \omega_n^-=\prod_{1\le j\le n \text{ odd}} \Phi_j(\gamma).
\end{equation*}


\noindent  For $n\geq 1$, define $M_{f, n}:=B_f^{-m-1}C_{f, n}\cdot\cdot\cdot C_{f, 1}$, where $C_{f, n}=\begin{pmatrix}a_p(f) & 1\\
-\Phi_{n} & 0
\end{pmatrix}\in M_2(\Lambda)$ and $B_f:=\begin{pmatrix}a_p(f) & 1\\
-p & 0
\end{pmatrix}\in M_2(\Lambda)$.
Note that $C_{f,n+1} \equiv B_f \mod \omega_n$, as we know $\Phi_{n}\equiv p\mod \omega_n$. This implies that the sequence $\{M_{f, n}\}_{n\in \N}$ forms a compatible system of matrices and converges to a matrix $M_{f, \log}\in M_2(\mathcal{H}(\Gamma))$.

 Consider the $\Lambda$-morphism 
\begin{equation*}
    H_{f,n}:\Lambda_n^2\longrightarrow \Lambda_n^2,
\end{equation*}
defined by 
\begin{equation*}
H_{f, n}\begin{pmatrix}x \\y \end{pmatrix}=C_{f, n}\cdot\cdot\cdot C_{f, 1}\begin{pmatrix}x \\y \end{pmatrix}. \\
\end{equation*}
Moreover, there is a natural isomorphism induced by the projections $\Lambda\longrightarrow \Lambda_n$, 
\[\Lambda^2\stackrel{\sim}{\longrightarrow}\varprojlim_{n}\Lambda^2/\ker(H_{f, n}). \]
Using the above isomorphism, we have the following lemma from \cite{BBL22}.  
\begin{lemma}\cite[Theorem~3.5]{BBL22}\label{lm:Hfn}
    There exist unique $\cL_f^\sharp, \cL_f^\flat\in \Lambda$ such that for $n\geq 1$ we have
\begin{equation*}
    H_{f, n}\begin{pmatrix}\cL_f^\sharp\\
\cL_f^\flat
\end{pmatrix}\equiv\begin{pmatrix}\cL_{f, n}\\ -\xi_{n-1}\cL_{f, n-1} \end{pmatrix}\mod \omega_n,
\end{equation*}
where $H_{f, n}$ is the logarithm matrix defined above as in \cite[Definition~3.3]{BBL22}.
\end{lemma}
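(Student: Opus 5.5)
Write $v_n:=\begin{pmatrix}\cL_{f,n}\\ -\xi_{n-1}\cL_{f,n-1}\end{pmatrix}\in\Lambda_n^2$. The plan is to reduce the statement to two facts: the vectors $v_n$ are compatible under the transition maps coming from the matrices $C_{f,n}$, and the kernels of $H_{f,n}$ form an inverse system whose limit is zero. Existence then comes from compatibility, and uniqueness from the vanishing of the limit.

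\textbf{Compatibility.} I would first show that $v_{n+1}$ projects to $C_{f,n+1}v_n$ modulo $\omega_n$, where $C_{f,n+1}\equiv B_f \bmod \omega_n$. The first coordinate of $B_f v_n$ is $a_p\cL_{f,n}-\xi_{n-1}\cL_{f,n-1}$, which equals $\pi_n^{n+1}(\cL_{f,n+1})$ by the three-term relation \eqref{three-term}. The second coordinate is $-p\cL_{f,n}$. By Lemma~\ref{l2}, $\pi_n^{n+1}(\xi_n\cL_{f,n})=p\cL_{f,n}$, so this equals $\pi_n^{n+1}(-\xi_n\cL_{f,n})$. Hence
\[
v_{n+1}\bmod\omega_n=B_f v_n=C_{f,n+1}v_n \bmod \omega_n .
\]
This is the anticyclotomic analogue of the relation used by Sprung, and it needs only \eqref{three-term} and Lemma~\ref{l2}.

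\textbf{Existence.} By induction I would show that $v_n$ lies in the image of $H_{f,n}$ on $\Lambda_n^2$, up to the kernel. The first step is $H_{f,1}v_0$-type bookkeeping: the first step uses $C_{f,1}$ together with the explicit form of $\cL_{f,0}$ and $\cL_{f,1}$. For the inductive step, suppose $v_n=H_{f,n}(x_n)$ modulo $\omega_n$. Then
\[
v_{n+1}\equiv C_{f,n+1}H_{f,n}(\tilde x_n)=H_{f,n+1}(\tilde x_n) \bmod \omega_n ,
\]
for any lift $\tilde x_n$ of $x_n$. I would then correct $\tilde x_n$ by an element supported on $\omega_n$ so that the congruence holds modulo $\omega_{n+1}$. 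This is the step where the structure of $\det C_{f,j}=\Phi_j$ enters. One has $\omega_{n+1}=\omega_n\Phi_{n+1}$, and the discrepancy $v_{n+1}-H_{f,n+1}\tilde x_n$ lies in $\omega_n\Lambda_{n+1}^2$. I would argue that this discrepancy lies in $H_{f,n+1}(\omega_n\Lambda^2)$ modulo $\omega_{n+1}$. Concretely, the adjugate $\mathrm{adj}(H_{f,n+1})$ has determinant $\prod_{j\le n+1}\Phi_j$. The second coordinate of the discrepancy is automatically divisible by $\xi_n$, which is $\Phi_{n+1}$ up to units in $\Lambda_{n+1}$. A Sprung-style division argument then shows that solving the equation modulo $\omega_{n+1}$ is possible.

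\textbf{Main obstacle and the limit.} The hardest step is showing that the corrected classes $x_n$ form a compatible system in $\varprojlim\Lambda^2/\ker H_{f,n}$. The quoted isomorphism $\Lambda^2\cong\varprojlim\Lambda^2/\ker(H_{f,n})$ would then produce a unique pair $(\cL_f^\sharp,\cL_f^\flat)$. Uniqueness holds because two solutions differ by an element of $\bigcap_n\ker H_{f,n}=0$. For the divisibility in the inductive step, I would try first to follow \cite[\S3]{BBL22} and Sprung's argument, writing the $\xi$-divisibility of the second entry as a relation modulo $\Phi_{n+1}$.
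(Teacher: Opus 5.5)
Your skeleton is the standard one behind \cite[Theorem~3.5]{BBL22}, which the paper simply quotes without proof. It has three parts: the compatibility $v_{n+1}\equiv C_{f,n+1}v_n \pmod{\omega_n}$, which you verify correctly from \eqref{three-term} and Lemma~\ref{l2}; a preimage under $H_{f,n}$ at each finite level; and passage to the limit through $\Lambda^2\cong\varprojlim\Lambda^2/\ker(H_{f,n})$, which also gives uniqueness.

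The gap is in the one step with real content: existence of a preimage at level $n+1$. You propose to correct a lift $\tilde x_n$ by an element of $\omega_n\Lambda^2$, i.e.\ to show the discrepancy lies in $H_{f,n+1}(\omega_n\Lambda^2)$ modulo $\omega_{n+1}$. This is false in general. Modulo $\omega_{n+1}$ the discrepancy has the form $(\omega_n d,0)^T$, since its second entry is $\Phi_{n+1}$ times a multiple of $\omega_n$. On the other hand, $H_{f,n+1}(\omega_n z)\equiv(\omega_n\,(a_p,1)H_{f,n}z,\,0)^T$. So you would need $d(\zeta_{p^{n+1}})$ to lie in the ideal of $\cO[\zeta_{p^{n+1}}]$ generated by the entries of $(a_p,1)H_{f,n}(\zeta_{p^{n+1}})$. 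For $n\ge1$ and $a_p$ a non-unit this ideal is proper, because $a_p$ and the $\Phi_j(\zeta_{p^{n+1}})$, $j\le n$, all lie in the maximal ideal. Meanwhile $d$ is not under control. It depends on the values of $\cL_{f,n+1}$ at characters of conductor $p^{n+1}$, which the three-term relation does not constrain. It also shifts by an arbitrary amount when $x_n$ is replaced by another level-$n$ solution. For example, with $a_p=0$ and $n=1$ one has $H_{f,2}=\mathrm{diag}(-\Phi_1,-\Phi_2)$, so $H_{f,2}(\omega_1\Lambda^2)\equiv\{(\omega_1\Phi_1u,0)^T\}$. But replacing $x_1$ by the equally valid $x_1+((\gamma-1)t,0)^T$ changes $d$ to $d+t$. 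So no division argument can prove the claim ``for any lift'' as you state it.

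The missing idea is that $\omega_n\Lambda^2\subseteq H_{f,n}\Lambda^2$. Since $\det C_{f,j}=\Phi_j$ and $\omega_n=(\gamma-1)\prod_{j=1}^n\Phi_j$, one has $H_{f,n}\bigl((\gamma-1)\,\mathrm{adj}(H_{f,n})\,w\bigr)=\omega_n w$ for all $w\in\Lambda^2$. You do mention an adjugate, but what matters is the adjugate of $H_{f,n}$ (not $H_{f,n+1}$), used through $H_{f,n}\,\mathrm{adj}(H_{f,n})=\det H_{f,n}$. Take $w=(0,d)^T$ and $z=(\gamma-1)\,\mathrm{adj}(H_{f,n})\,w$. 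Then $H_{f,n+1}z=\omega_n C_{f,n+1}w=(\omega_n d,0)^T$, which removes the discrepancy. This $z$ is killed by $H_{f,n}$ modulo $\omega_n$ but is not in $\omega_n\Lambda^2$, so the correction changes the representative $x_n$ itself, not merely its lift.

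Equivalently: $v_{n+1}=C_{f,n+1}\Psi$ holds exactly in $\Lambda_{n+1}^2$, where $\Psi=(\tilde\cL_{f,n},\,\cL_{f,n+1}-a_p\tilde\cL_{f,n})^T\equiv v_n\pmod{\omega_n}$, and $\Psi-H_{f,n}\tilde x_n\in\omega_n\Lambda^2\subseteq H_{f,n}\Lambda^2$. Taking $H_{f,0}$ to be the identity, this also handles the base case.

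Conversely, the step you single out as the main obstacle is automatic. If $H_{f,n+1}x_{n+1}\equiv v_{n+1}\pmod{\omega_{n+1}}$, reducing modulo $\omega_n$ gives $B_f(H_{f,n}x_{n+1}-v_n)\equiv 0$. Since $\det B_f=p$ and $\Lambda_n$ is $\cO$-torsion-free, $B_f$ is injective on $\Lambda_n^2$. Hence the classes of the $x_n$ in $\Lambda^2/\ker(H_{f,n})$ are compatible, and the quoted isomorphism finishes the argument.
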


For simplicity, we first consider the case when $a_p=0$. The case $a_p\ne 0$ will be dealt in a similar fashion with minor modification later in this section.  In this case we first compute the matrices $H_{f, n}$  for $n\geq 1$ as follows: 
\begin{equation}\label{eq:Hfn}
    H_{f, n}=\begin{cases}(-1)^{n/2}\begin{pmatrix}\omega^-_n & 0\\
0 & \omega^+_n\end{pmatrix} & \text{if } n \text{ is even}, \\
(-1)^{(n-1)/2}\begin{pmatrix}0 & \omega^+_{n}\\
-\omega^-_{n} & 0\end{pmatrix} & \text{if } n \text{ is odd}.
\end{cases}
\end{equation} 
  This observation gives us the following explicit description of $\cL_f^\sharp$ and $\cL_f^\flat$ in terms of $\cL_{f, n}$ and $\xi_{n-1}\cL_{f, n-1}$. Thus we have
\begin{equation}\label{eq:1}
    \begin{pmatrix}\cL_{f, n}\\ -\xi_{n-1}\cL_{f, n-1} \end{pmatrix}\equiv\begin{cases}(-1)^{n/2}\begin{pmatrix}\omega^-_n & 0\\
0 & \omega^+_n\end{pmatrix}\begin{pmatrix}\cL_f^\sharp\\
\cL_f^\flat
\end{pmatrix} \mod\omega_n & \text{if } n \text{ is even}, \\
(-1)^{(n-1)/2}\begin{pmatrix}0 & \omega^+_{n}\\
-\omega^-_{n} & 0\end{pmatrix}\begin{pmatrix}\cL_f^\sharp\\
\cL_f^\flat\end{pmatrix}\mod\omega_n & \text{if } n \text{ is odd}.
\end{cases}
\end{equation}
From the above observation, we deduce the following Lemma. 
\begin{lemma}\label{lm:2}
    For $n\geq 1$, 
    \begin{equation*}
    \cL_{f, n}\equiv\begin{cases}(-1)^{n/2}\omega^-_n\cL_f^\sharp \mod \omega_n & \text{if } n \text{ is even}, \\
(-1)^{(n-1)/2}\omega^+_{n}\cL_f^\flat \mod \omega_n & \text{if } n \text{ is odd}.
\end{cases}
    \end{equation*}
\end{lemma}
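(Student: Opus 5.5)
The lemma is the first coordinate of the congruence \eqref{eq:1}, so the plan is to read it off from Lemma \ref{lm:Hfn} once $H_{f,n}$ is known explicitly in the case $a_p(f)=0$. The only real work is to justify the closed form \eqref{eq:Hfn} for the product $C_{f,n}\cdots C_{f,1}$. With $a_p(f)=0$ each factor is
\[
C_{f,j}=\begin{pmatrix}0&1\\ -\Phi_j&0\end{pmatrix}.
\]
Left multiplication by $C_{f,j}$ sends a matrix with rows $(r_1,r_2)$ to the matrix with rows $(r_2,-\Phi_j r_1)$.

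I will prove \eqref{eq:Hfn} by induction on $n$.
\begin{itemize}
\item \emph{Base case $n=1$.} Here $C_{f,1}=\begin{pmatrix}0&1\\-\Phi_1&0\end{pmatrix}$. Since $\omega_1^+=1$ and $\omega_1^-=\Phi_1$, this is exactly the odd-$n$ formula with sign $(-1)^0$.
\item \emph{From odd $n$ to even $n+1$.} Multiply the odd-$n$ matrix $(-1)^{(n-1)/2}\begin{pmatrix}0&\omega_n^+\\-\omega_n^-&0\end{pmatrix}$ on the left by $C_{f,n+1}$. The result is $(-1)^{(n-1)/2}\begin{pmatrix}-\omega_n^-&0\\0&-\Phi_{n+1}\omega_n^+\end{pmatrix}$. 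Because $n+1$ is even, $\Phi_{n+1}\omega_n^+=\omega_{n+1}^+$ and $\omega_n^-=\omega_{n+1}^-$. The overall sign becomes $-(-1)^{(n-1)/2}=(-1)^{(n+1)/2}$, as required.
\item \emph{From even $n$ to odd $n+1$.} Multiplying $(-1)^{n/2}\operatorname{diag}(\omega_n^-,\omega_n^+)$ on the left by $C_{f,n+1}$ gives $(-1)^{n/2}\begin{pmatrix}0&\omega_n^+\\-\Phi_{n+1}\omega_n^-&0\end{pmatrix}$. Now $\Phi_{n+1}\omega_n^-=\omega_{n+1}^-$ and $\omega_n^+=\omega_{n+1}^+$. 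The sign $(-1)^{n/2}=(-1)^{((n+1)-1)/2}$ is already correct.
\end{itemize}

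Next, apply Lemma \ref{lm:Hfn} with this explicit $H_{f,n}$; this yields \eqref{eq:1}. Taking the first coordinate gives the claim in each parity:
\begin{itemize}
\item if $n$ is even, $\cL_{f,n}\equiv(-1)^{n/2}\omega_n^-\cL_f^\sharp \pmod{\omega_n}$;
\item if $n$ is odd, $\cL_{f,n}\equiv(-1)^{(n-1)/2}\omega_n^+\cL_f^\flat \pmod{\omega_n}$.
\end{itemize}

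The main obstacle is only the sign and index bookkeeping in the induction, in particular matching which of $\omega^\pm$ absorbs $\Phi_{n+1}$ at each step. Beyond that, I only need to make sure the congruence is interpreted in $\Lambda_n=\Lambda/(\omega_n)\cong\cO[G_n]$, so that $\cL_{f,n}$ is identified with its image there. The second coordinate of \eqref{eq:1} is not needed for this lemma, though it gives the analogous formula for $\xi_{n-1}\cL_{f,n-1}$.
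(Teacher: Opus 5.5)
Your proof is correct and follows the paper's route: the paper reads the lemma off the first coordinate of \eqref{eq:1}, which is Lemma \ref{lm:Hfn} combined with the explicit form \eqref{eq:Hfn} of $H_{f,n}$ when $a_p(f)=0$. Your induction supplies the verification of \eqref{eq:Hfn}, which the paper states without proof, and your sign and index bookkeeping (which of $\omega_n^\pm$ absorbs $\Phi_{n+1}$ at each step) checks out.
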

\begin{proof}
    This is clear from \eqref{eq:1}.
\end{proof}

 Recall the definition of $q_n$ from \cite{gajek}. 
\begin{equation*}
    q_n=\begin{cases}p^{n-1}-p^{n-2}+\cdots +p-1 & \text{if } n \text{ is even}, \\
p^{n-1}-p^{n-2}+\cdots +p^2-p & \text{if } n \text{ is odd}.
\end{cases}
\end{equation*} 
and $q_0=q_1=0.$
\begin{lemma}\label{lm:1}
For all $n\ge1$, we have $\mu(\omega_n^{\pm})=0$ and $\lambda(\omega_n^{\epsilon_{n+1}})=q_n$, where $\epsilon^{n+1}=(-1)^{n+1}$. 
\end{lemma}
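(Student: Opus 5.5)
The plan is to compute the invariants of each cyclotomic factor $\Phi_j$ separately and then add them up. Lemma \ref{l1} gives additivity, and the $\mu$ condition needed for it is automatic.

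\textbf{Step 1: the invariants of a single factor.} Modulo $\varpi$ we have the identity of polynomials in $\gamma$
\[
\Phi_j(\gamma)=\frac{\gamma^{p^j}-1}{\gamma^{p^{j-1}}-1}\equiv \frac{(\gamma-1)^{p^j}}{(\gamma-1)^{p^{j-1}}}=(\gamma-1)^{p^j-p^{j-1}} \pmod{\varpi}.
\]
This is the same computation as in the proof of Lemma \ref{xi}. Under $\gamma\mapsto 1+T$, the image of $\Phi_j$ in $\cO[[T]]$ is a monic polynomial whose reduction mod $\varpi$ is $T^{p^j-p^{j-1}}$. Its coefficient of $T^{p^j-p^{j-1}}$ is $1$, so $\mu(\Phi_j)=0$ and $\lambda(\Phi_j)=p^j-p^{j-1}=\phi(p^j)$.

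\textbf{Step 2: pass to the products.} The elements $\omega_n^\pm$ are finite products of the $\Phi_j$. Their reductions mod $\varpi$ are therefore products of powers of $T$, which are nonzero. Hence $\mu(\omega_n^\pm)=0$. Equivalently, $\mu$ is additive in $\Lambda\cong\cO[[T]]$ by Gauss's lemma, since $\cO[[T]]/\varpi\cong\mathbb{F}[[T]]$ is a domain.

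Since all the $\mu$'s vanish, $\lambda$ is additive. In $\Lambda$ this is because $\lambda$ is the $T$-adic order of the reduction. It can also be seen from Lemma \ref{l1} applied in $\Lambda_m$ for $m\gg0$ together with Lemma \ref{l3}. This gives
\[
\lambda(\omega_n^+)=\sum_{\substack{1\le j\le n\\ j \text{ even}}}(p^j-p^{j-1}),\qquad \lambda(\omega_n^-)=\sum_{\substack{1\le j\le n\\ j \text{ odd}}}(p^j-p^{j-1}).
\]

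\textbf{Step 3: match the sums with $q_n$.} The statement's sign convention is $\epsilon_{n+1}=(-1)^{n+1}$, identified with the sign $\pm$. So we must show that $\omega_n^-$ has $\lambda=q_n$ for $n$ even, and that $\omega_n^+$ has $\lambda=q_n$ for $n$ odd. There is one apparent mismatch of indices. For $n$ even the odd-$j$ sum is $\sum_{j\le n-1,\ j\text{ odd}}(p^j-p^{j-1})=p^{n-1}-p^{n-2}+\cdots+p-1=q_n$, as required. For $n$ odd the even-$j$ sum runs over $j\le n-1$ and gives $p^{n-1}-p^{n-2}+\cdots+p^2-p=q_n$. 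Both are telescoping alternating sums.

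The edge cases also check out. For $n=1$ the product $\omega_1^+$ is empty, so $\lambda(\omega_1^+)=0=q_1$. The case $n=0$ does not arise, since $n\ge1$.

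\textbf{Expected obstacle.} The only real issue is the bookkeeping of the parity convention $\epsilon_{n+1}$ against the definition of $\omega_n^\pm$, since the sign must be read correctly to match $q_n$. I would verify it against Lemma \ref{lm:2}, where $\omega_n^-$ occurs for $n$ even and $\omega_n^+$ for $n$ odd. That is exactly what the later application needs.
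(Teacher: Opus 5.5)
Your proof is correct and follows essentially the same route as the paper. The paper simply cites \cite[Lemma~3.7]{gajek}, whose content is exactly your computation: $\Phi_j(\gamma)\equiv(\gamma-1)^{p^j-p^{j-1}}\pmod{\varpi}$, then additivity of $\mu$ and $\lambda$ over the product, then the telescoping identification of the sum of $p^j-p^{j-1}$ over $j\le n-1$ of parity opposite to $n$ with $q_n$. Your parity bookkeeping ($\omega_n^-$ for $n$ even, $\omega_n^+$ for $n$ odd, including the empty product at $n=1$) matches how the lemma is used in Lemma~\ref{lm:2}.
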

\begin{proof}    The proof is follows from \cite[Lemma~3.7]{gajek}. 
\end{proof} 

\subsection{Iwasawa invariants in the non-ordinary case}
 To prove our main results in the non-ordinary case, we first prove the following propositions. 
\begin{Proposition}\label{main-thm}
   Assume the hypotheses (\ref{disc}) and (\ref{def}).  Let $f\in S_2(\Gamma_0(N))$ be a newform with $a_p(f)=0$. Assume that $\mu(\cL_f^\sharp)=\mu(\cL_f^\flat)\ne\infty$. Then for all sufficiently large $n$, we have 
     \[
\mu(\mathcal{L}_{f,n})=\mu(\mathcal{L}_f^\ast)
 \text{ and }
\lambda(\mathcal{L}_{f,n})=\lambda(\mathcal{L}_f^\ast)+q_n,
\]
where $\ast = \sharp$ if $n$ is even and $\ast = \flat$ if $n$ is odd. 
\end{Proposition}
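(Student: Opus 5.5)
The plan is to read the invariants of $\cL_{f,n}$ directly off Lemma~\ref{lm:2}. That lemma gives $\cL_{f,n}\equiv \pm\,\omega_n^{\epsilon}\cL_f^\ast \bmod \omega_n$, with $(\epsilon,\ast)=(-,\sharp)$ for $n$ even and $(+,\flat)$ for $n$ odd. Since $\Lambda_n=\Lambda/(\omega_n)\cong\cO[G_n]$, the element $\cL_{f,n}\in\cO[G_n]$ is exactly the image of $P:=\omega_n^{\epsilon}\cL_f^\ast\in\Lambda$. The sign $\pm1$ is a unit and does not affect the invariants.

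Lemma~\ref{l3} cannot be applied directly, because $P$ depends on $n$. So I first reduce to the case $\mu=0$.

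\textbf{Reduction to $\mu=0$.} Write $\mu:=\mu(\cL_f^\sharp)=\mu(\cL_f^\flat)<\infty$ and set $\cL_f^\ast=\varpi^{\mu}\cK^\ast$ with $\mu(\cK^\ast)=0$. (Here $\mu$ is measured in powers of $\varpi$; for the normalization with $\ord_p$ one rescales in the same way.) Lemma~\ref{lm:1} says $\omega_n^{\pm}$ is monic with $\mu=0$. Hence $\cL_{f,n}=\varpi^{\mu}\cdot(\text{image of }\omega_n^{\epsilon}\cK^\ast)$.

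It therefore suffices to show that the image $Q_n$ of $\omega_n^{\epsilon}\cK^\ast$ in $\cO[G_n]$ has $\mu(Q_n)=0$ and $\lambda(Q_n)=q_n+\lambda(\cK^\ast)$. Granting this, $\mu(\cL_{f,n})=\mu$ and $\lambda(\cL_{f,n})=\lambda(Q_n)=q_n+\lambda(\cL_f^\ast)$.

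\textbf{Computing the invariants of $Q_n$.} Work modulo $\varpi$ in $\mathbb F[G_n]\cong\mathbb F[T]/(T^{p^n})$, where $\omega_n\equiv T^{p^n}$. Since $\Phi_j\equiv T^{p^j-p^{j-1}}\bmod\varpi$, we get $\omega_n^{\epsilon}\equiv T^{q_n}$ by Lemma~\ref{lm:1}. By Weierstrass preparation, $\cK^\ast\equiv T^{\lambda(\cK^\ast)}u(T)\bmod\varpi$ with $u(0)\ne0$. So modulo $\varpi$,
\[
\omega_n^{\epsilon}\cK^\ast\equiv T^{\,q_n+\lambda(\cK^\ast)}u(T).
\]

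\textbf{Main obstacle.} The key point is to ensure $q_n+\lambda(\cK^\ast)<p^n$ for $n\gg0$, so that this reduction is nonzero in $\mathbb F[T]/(T^{p^n})$. Since $q_n\le p^{n-1}$ and $\lambda(\cK^\ast)$ is fixed (the two possible values, for $\sharp$ and $\flat$, are both finite), this holds for large $n$. Then $\mu(Q_n)=0$, and since $I_n=(T)$, $\ord_{I_n}$ of this reduction equals $q_n+\lambda(\cK^\ast)$.

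Equivalently, one can invoke Lemma~\ref{lm:3.5} with $P=\omega_n^{\epsilon}\cK^\ast$. Here $\mu(P)=0$ and $\lambda(P)=q_n+\lambda(\cK^\ast)<p^n$, and $Q$ is the reduction of $P$ modulo $\omega_n$ to a polynomial of degree $<p^n$. This yields the asserted equalities for all sufficiently large $n$ of each parity.
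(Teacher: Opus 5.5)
Your proposal is correct and takes essentially the paper's route: Lemma~\ref{lm:2} identifies $\cL_{f,n}$ with the image of $\pm\omega_n^{\epsilon}\cL_f^\ast$, Lemma~\ref{lm:1} gives $\omega_n^{\epsilon}\equiv T^{q_n}\bmod\varpi$, and the invariants pass to $\cO[G_n]$ once $q_n+\lambda(\cL_f^\ast)<p^n$. Factoring out $\varpi^{\mu}$ and computing directly in $\mathbb{F}[T]/(T^{p^n})$ (or via Lemma~\ref{lm:3.5}) makes rigorous what the paper does by citing \cite[Corollary~2.7]{gajek} and Lemma~\ref{l3}; as you rightly note, Lemma~\ref{l3} does not literally apply to an element that depends on $n$.
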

\begin{proof}
Suppose $n\ge n_0^+$ is an even integer. Then from \cite[Corollary 2.7]{gajek} and lemma \ref{lm:2}, we have that $\mu(\cL_{f, n})=\mu(\omega^-_n\cL_f^\sharp)=\mu(\cL_f^\sharp)$ for sufficiently large $n$, since $\mu(\omega^+_n)=0$ (see lemma \ref{lm:1}). From Lemma \ref{lm:1}, we get that the $\lambda$-invariant of $\omega^-_n\cL_f^\sharp$ is equal to $q_n + \lambda(\mathcal{L}_f^\sharp)$. From the expression of $q_n$, it follows that $p^n-q_n\rightarrow \infty$ as $n\rightarrow\infty$ and therefore $\lambda(\cL_f^\ast)<p^n-q_n$,  for sufficiently large $n$.
Therefore by Lemma \ref{lm:1} and Lemma \ref{l3}, we have that 
$$\lambda(\cL_{f, n})=\lambda(\omega^-_n\cL_f^\sharp)=q_n+\lambda(\cL_f^\sharp)$$
for sufficiently large $n$. The proof for odd integer $n$ is similar. This completes the proof. 
\end{proof}

Now we consider the case $a_p\ne 0$. To prove the analogue of Proposition \ref{main-thm} for this case we need the following lemma. 
\begin{lemma}
     For $n\geq 1$, 
    \begin{equation*}
    \cL_{f, n}\equiv\begin{cases}(-1)^{n/2}\omega^-_n\cL_f^\sharp \mod (\omega_n,\varpi)  & \text{if } n \text{ is even}, \\
(-1)^{(n-1)/2}\omega^+_{n}\cL_f^\flat \mod (\omega_n, \varpi) & \text{if } n \text{ is odd}.
\end{cases}
    \end{equation*}
\end{lemma}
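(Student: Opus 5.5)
The plan is to reduce to the case $a_p=0$ already handled in \eqref{eq:Hfn} and Lemma~\ref{lm:2}, by working modulo $\varpi$ throughout. The hypothesis in force is $a_p(f)\equiv 0 \pmod{\varpi}$, so each matrix $C_{f,j}$ satisfies
\[
C_{f,j}\equiv \begin{pmatrix}0 & 1\\ -\Phi_j & 0\end{pmatrix} \pmod{\varpi}
\]
as matrices over $\Lambda$. Since reduction modulo $\varpi$ is a ring homomorphism $M_2(\Lambda)\to M_2(\Lambda/\varpi)$, the product $H_{f,n}=C_{f,n}\cdots C_{f,1}$ is congruent modulo $\varpi$ to the same product taken with $a_p=0$.

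First, I will verify the closed form \eqref{eq:Hfn} for $a_p=0$ by induction on $n$. For $n=1$ the product is just $C_{f,1}$, which has the odd shape with $\omega_1^+=1$ and $\omega_1^-=\Phi_1$. For the inductive step, multiply on the left by $\begin{pmatrix}0&1\\-\Phi_{n+1}&0\end{pmatrix}$. This swaps the rows and multiplies the new second row by $-\Phi_{n+1}$. The factor $\Phi_{n+1}$ is absorbed into $\omega^+$ or $\omega^-$ according to the parity of $n+1$, and the signs $(-1)^{\lfloor n/2\rfloor}$ update accordingly. This is a routine $2\times 2$ computation.

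Consequently, modulo $(\varpi,\omega_n)$, $H_{f,n}$ agrees with the diagonal matrix when $n$ is even and with the antidiagonal matrix when $n$ is odd. Next, apply Lemma~\ref{lm:Hfn}, which holds for general $a_p$ with the same $\cL_f^\sharp,\cL_f^\flat\in\Lambda$:
\[
H_{f,n}\begin{pmatrix}\cL_f^\sharp\\ \cL_f^\flat\end{pmatrix}\equiv \begin{pmatrix}\cL_{f,n}\\ -\xi_{n-1}\cL_{f,n-1}\end{pmatrix} \pmod{\omega_n}.
\]
Reducing this further modulo $\varpi$ and substituting the congruence for $H_{f,n}$, the first coordinate gives the claim. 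For $n$ even it reads $(-1)^{n/2}\omega_n^-\cL_f^\sharp$, and for $n$ odd it reads $(-1)^{(n-1)/2}\omega_n^+\cL_f^\flat$.

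The only point requiring care, and the main potential obstacle, is that the congruence $H_{f,n}\equiv H_{f,n}|_{a_p=0}$ is an entrywise congruence of matrices modulo the ideal $(\varpi)$ of $\Lambda$. It is preserved under multiplication by the vector $(\cL_f^\sharp,\cL_f^\flat)^t\in\Lambda^2$, and then under passage to $\Lambda/(\omega_n,\varpi)$. Both steps follow immediately because $(\varpi,\omega_n)$ is an ideal, so no issue with the non-integral matrix $M_{f,\log}$ arises: only the integral $H_{f,n}$ is used.
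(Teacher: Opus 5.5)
Your proof is correct and takes the same route as the paper: since $a_p\equiv 0 \pmod{\varpi}$, you reduce $H_{f,n}$ modulo $\varpi$ to the $a_p=0$ closed form \eqref{eq:Hfn}, then read off the first coordinate of Lemma~\ref{lm:Hfn} modulo $(\omega_n,\varpi)$. You also check \eqref{eq:Hfn} by induction, which the paper only asserts, and you cite Lemma~\ref{lm:Hfn} as the key input, which is the right one, whereas the paper's one-line proof refers to Lemma~\ref{lm:1}.
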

\begin{proof}
    We first note that $a_p\equiv 0\pmod{\varpi}$. Therefore,
    \begin{equation*}
        H_{f, n}\equiv\begin{cases}(-1)^{n/2}\begin{pmatrix}\omega^-_n & 0\\
0 & \omega^+_n\end{pmatrix} \pmod{\varpi} & \text{if } n \text{ is even}, \\
(-1)^{(n-1)/2}\begin{pmatrix}0 & \omega^+_{n}\\
-\omega^-_{n} & 0\end{pmatrix}\pmod{\varpi} & \text{if } n \text{ is odd}.
\end{cases}
\end{equation*}
and hence the proof follows from lemma \ref{lm:1}. 
\end{proof}
Now we have the following analogue of Theorem \ref{main-thm} for the case $a_p\ne 0$.  
\begin{Proposition}\label{thm-ap_non_0}
  Assume the hypotheses (\ref{disc}) and (\ref{def}).  Let $f\in S_2(\Gamma_0(N))$ be a newform with non-ordinary at the prime $p$.  Assume that $\mu(\cL_f^\sharp)=\mu(\cL_f^\flat)\ne\infty$. Then for all sufficiently large $n$, we have 
     \[
\mu(\mathcal{L}_{f,n})=\mu(\mathcal{L}_f^\ast)
 \text{ and }
\lambda(\mathcal{L}_{f,n})=\lambda(\mathcal{L}_f^\ast)+q_n,
\]
where $\ast = \sharp$ if $n$ is even and $\ast = \flat$ if $n$ is odd. 
\end{Proposition}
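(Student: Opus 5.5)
The plan is to mirror the proof of Proposition~\ref{main-thm}, with the exact congruence of Lemma~\ref{lm:2} replaced by the mod $(\omega_n,\varpi)$ congruence of the preceding lemma. We then pass from that weak congruence to equality of Iwasawa invariants via Lemma~\ref{lm:3.5}. Since $a_p\equiv 0 \pmod{\varpi}$ (non-ordinary, weight two, $p\ge5$), the lemma just proved gives, for $n$ even,
\[
\cL_{f,n}\equiv (-1)^{n/2}\omega_n^-\cL_f^\sharp \mod (\omega_n,\varpi),
\]
and the analogous statement with $\omega_n^+\cL_f^\flat$ for $n$ odd. We treat $n$ even; the odd case is symmetric, with the roles of $\pm$ and $\sharp/\flat$ swapped.

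First reduce to the case $\mu(\cL_f^\sharp)=\mu(\cL_f^\flat)=0$. Write $\mu=\mu(\cL_f^\sharp)=\mu(\cL_f^\flat)$ and set $\cL_f^{\sharp\prime}=\varpi^{-\mu}\cL_f^\sharp$, and similarly for $\flat$. Since both entries of $(\cL_f^\sharp,\cL_f^\flat)$ are divisible by $\varpi^\mu$, Lemma~\ref{lm:Hfn} shows that $H_{f,n}$ applied to the primed pair is congruent mod $\omega_n$ to $\varpi^{-\mu}(\cL_{f,n},-\xi_{n-1}\cL_{f,n-1})^t$. In particular $\varpi^\mu$ divides $\cL_{f,n}$ in $\Lambda_n$, so $\mu(\cL_{f,n})\ge\mu$, and $\varpi^{-\mu}\cL_{f,n}$ satisfies the same mod $(\omega_n,\varpi)$ congruence with the primed elements. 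This is exactly where the equality of the two $\mu$-invariants is used: it ensures both components can be divided by the same power of $\varpi$.

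Now assume $\mu=0$. Let $Q\in\cO[T]$ be the unique polynomial of degree $<p^n$ representing $\varpi^{-\mu}\cL_{f,n}$ in $\Lambda_n$, and let $P=(-1)^{n/2}\omega_n^-\cL_f^{\sharp\prime}\in\Lambda$. By Lemma~\ref{lm:1} and Lemma~\ref{l1}, the power series $P$ has $\mu(P)=0$ and $\lambda(P)=q_n+\lambda(\cL_f^\sharp)$. Since $p^n-q_n\to\infty$, this gives $\lambda(P)<p^n$ for $n\gg0$. Lemma~\ref{lm:3.5} then gives $\mu(Q)=0$ and $\lambda(Q)=q_n+\lambda(\cL_f^\sharp)$.

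Finally, transfer the invariants of $Q$ to the group-ring element. Because $\deg Q<p^n$, the invariants of $Q$ agree with those of its image in $\Lambda_n=\cO[G_n]$: modulo $\varpi$, the element $\mathbb{F}[G_n]$ is $\mathbb{F}[T]/T^{p^n}$ and the augmentation ideal is $(T)$. Hence $\mu(\varpi^{-\mu}\cL_{f,n})=0$, so $\mu(\cL_{f,n})=\mu(\cL_f^\sharp)$, and $\lambda(\cL_{f,n})=q_n+\lambda(\cL_f^\sharp)$.

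The main obstacle is the normalization step. When $\mu>0$, the congruence mod $\varpi$ alone carries no information, so one must divide by $\varpi^\mu$ before reducing, using the exact identity of Lemma~\ref{lm:Hfn} rather than its reduction. The hypothesis $\mu(\cL_f^\sharp)=\mu(\cL_f^\flat)$ is what makes this division legitimate for both components simultaneously. A secondary check is that $\lambda$ in $\cO[G_n]$ matches $\lambda$ of the degree-$<p^n$ polynomial representative, which holds because $\ord_{I_n}$ is the $T$-adic order in $\mathbb{F}[T]/(T^{p^n})$.
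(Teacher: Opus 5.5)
Your proposal is correct and follows the paper's route: it mirrors Proposition~\ref{main-thm}, replaces the exact congruence by the one modulo $(\omega_n,\varpi)$, and invokes Lemma~\ref{lm:3.5}. Your normalization by $\varpi^{\mu}$ via the exact identity of Lemma~\ref{lm:Hfn} (which is where $\mu(\cL_f^\sharp)=\mu(\cL_f^\flat)$ enters), together with your check that $\lambda$ in $\cO[G_n]$ is the $T$-adic order of the degree-$<p^n$ representative, makes explicit steps the paper's one-line proof leaves implicit, since Lemma~\ref{lm:3.5} requires $\mu(P)=0$.
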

\begin{proof}
    Using Lemma \ref{lm:3.5}, the proof is exactly analogues to the proof of Proposition \ref{main-thm}.
\end{proof}

Let $\alpha$ and $\beta$ are roots of the Hecke polynomial $X^2-a_p(h)X+p$. For any $\lambda\in \set{\alpha, \beta}$, the \emph{$\lambda$-stabilized Bertolini--Darmon element} is defined by
\[\cL_{h, n}^\lambda:=\frac{1}{\lambda^{n+1}}\left(\cL_{h, n}-\frac{1}{\lambda}\xi_{n-1}\cL_{h, n-1}\right). \]
It follows that from the three-term relation \eqref{three-term} one obtains that $\pi_{n+1,n}(\cL_{h, n+1}^\lambda)=\cL_{h, n}^\lambda$ for $n\geq 0$. The sequence $\{\cL_{h, n}^\lambda\}_{n\geq 0}$ is compatible with the natural projections $\pi_{n+1,n}$. This sequence converges to an element $\cL_h^\lambda=\varprojlim \cL_{h, n}^\lambda\in \mathcal{H}[\Gamma]$, where $\mathcal{H}(\Gamma)$ denotes the set of power series in $\cK[[T]]$ that converges in the open unit disk.  The following result is related to the non-vanishing  of $p$-adic $L$-function. 
\begin{lemma}
    For $\lambda\in \set{\alpha, \beta}$, $\cL_f^\lambda\ne 0.$
\end{lemma}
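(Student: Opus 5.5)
To show $\cL_f^\lambda\ne 0$, I would argue by contradiction, using the non-vanishing of the theta elements themselves together with the relation between $\cL_{f,n}^\lambda$ and $\cL_{f,n}$.

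Suppose $\cL_f^\lambda=0$. Then every finite-level image $\cL_{f,n}^\lambda$ vanishes, and by definition
\[
\cL_{f,n}=\frac{1}{\lambda}\,\xi_{n-1}\cL_{f,n-1}\qquad\text{for all } n\ge 1 .
\]
Since $\xi_{n-1}\cL_{f,n-1}$ lies in $\xi_{n-1}\Lambda_n=\Image(\nu)$, the element $\cL_{f,n}$ lies in the image of the corestriction for every $n$ (Lemma \ref{l2}(1)). Substituting into the three-term relation \eqref{three-term} gives
\[
\pi(\cL_{f,n+1})=a_p\cL_{f,n}-\lambda\cL_{f,n}=(a_p-\lambda)\cL_{f,n}=\mu_\lambda\cL_{f,n},
\]
where $\mu_\lambda$ denotes the other root of $X^2-a_pX+p$. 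On the other hand, applying $\pi$ to $\cL_{f,n+1}=\lambda^{-1}\xi_n\cL_{f,n}$ and using Lemma \ref{l2}(2) gives $\pi(\cL_{f,n+1})=\lambda^{-1}p\,\cL_{f,n}$. Since $\lambda\mu_\lambda=p$, these two expressions agree, so there is no contradiction at this formal level. That is expected: the vanishing of $\cL_f^\lambda$ means precisely that the system $\{\cL_{f,n}\}$ is "$\lambda$-pure".

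A genuine arithmetic input is therefore needed, and the cleanest is to use both roots together. The difference
\[
\lambda^{n+1}\cL_{f,n}^\lambda-\mu_\lambda^{n+1}\cL_{f,n}^{\mu_\lambda}=(\mu_\lambda^{-1}-\lambda^{-1})\,\xi_{n-1}\cL_{f,n-1}
\]
shows that if $\cL_f^\lambda=0$, then $\xi_{n-1}\cL_{f,n-1}$ is determined by $\cL^{\mu_\lambda}$ alone. Evaluating at the trivial character then gives
\[
\cL_{f,n}(\mathbf 1)=p\,\lambda^{-1}\cL_{f,n-1}(\mathbf 1)=\cdots=(p/\lambda)^n\cL_{f,0}(\mathbf 1).
\]
More generally, evaluating at a character $\chi$ of exact conductor $p^n$ gives $\chi(\cL_{f,n})=0$, because $\chi(\xi_{n-1})=0$. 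This contradicts the key step, which I expect to be the main obstacle: the non-vanishing of the theta elements at some primitive character of large conductor. For this I would invoke Vatsal's theorem \cite{V} (used already in the proof of Theorem \ref{main-thm-ord}): for $n\gg0$ there is a character $\chi$ of $G_n$ of exact conductor $p^n$ with $\chi(\cL_{f,n})\ne 0$, equivalently $L(f/K,\chi,1)\ne0$ by the Gross/Chida--Hsieh interpolation formula. Strictly, Vatsal's theorem is stated mod $\varpi$ for the ordinary stabilized form, but its non-vanishing statement for $\chi(\cL_{f,n})$ holds for all but finitely many primitive $\chi$ in the definite setting (\ref{def}) regardless of ordinarity.

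Combining these, $\chi(\cL_{f,n})=\chi(\lambda^{-1}\xi_{n-1}\cL_{f,n-1})=0$ contradicts the non-vanishing, so $\cL_f^\lambda\ne0$.
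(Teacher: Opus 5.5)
Your argument is correct and is essentially the argument behind the reference the paper cites (\cite[Lemma~3.8]{BBL22}). For a primitive character $\chi$ of $G_n$ one has $\chi(\xi_{n-1})=0$, so $\chi(\cL_f^\lambda)=\lambda^{-n-1}\chi(\cL_{f,n})$, and the Vatsal/Cornut--Vatsal non-vanishing of $\chi(\cL_{f,n})$ for all but finitely many such $\chi$ then forces $\cL_f^\lambda\neq 0$.

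Two small comments. The detours through the three-term relation, the difference of the two stabilizations and the trivial character can simply be dropped. Your caveat about Vatsal's theorem is also unnecessary: as far as I know, the characteristic-zero non-vanishing statement you invoke does not depend on ordinarity, and only the mod $\varpi$ refinements carry extra hypotheses.
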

\begin{proof}
    See \cite[Lemma~3.8]{BBL22}.
\end{proof} 
Using the above result it is shown in \cite[Theorem~3.9]{BBL22} that at least one of the two elements $\cL_f^\sharp$ and $\cL_f^\flat$ is non zero. Moreover, if $a_p=0$, then both $\cL_f^\sharp$ and $\cL_f^\flat$ are non-zero (see the proof of \cite[Theorem~3.9]{BBL22}). 

Following \cite{BBL22}, we define the following $p$-adic $L$-function. 
\begin{definition}\cite[Definition~3.6]{BBL22}\label{Def-L_function}
  The $p$-adic $L$-function associated to $f$ is defined as follows:
  \[L_p^\ast(f, K):=\cL_f^\ast(\cL_f^\ast)^\iota\]
  for $\ast\in\set{\sharp, \flat}$, where $\iota$ is the involution map on $\Lambda$. 
\end{definition}

 The following result is the relation between Iwasawa invariants of $p$-adic $ L$- functions and Iwasawa invariants of theta elements.  
 \begin{theorem}\label{main-thm-2}
 Assume the hypotheses (\ref{disc}) and (\ref{def}).  Let $f\in S_2(\Gamma_0(N))$ be a newform with $a_p(f)\equiv0\mod{\varpi}$. Assume that $\mu(L_p^\sharp(f, K_\infty))=\mu(L_p^\flat(f, K_\infty))\ne\infty$.
      Then for sufficiently large n, we have 
     \[
\mu(L_p^\ast(f, K))=2\mu(\mathcal{L}_{f,n})
 \text{ and }
2\lambda(\mathcal{L}_{f,n})=2q_n+\lambda(L_p^\ast(f,K))
\]
where $\ast = \sharp$ if $n$ is even and $\ast = \flat$ if $n$ is odd.  
 \end{theorem}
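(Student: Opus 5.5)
The plan is to reduce Theorem~\ref{main-thm-2} to Proposition~\ref{thm-ap_non_0} (or Proposition~\ref{main-thm} when $a_p=0$). The reduction uses the relation $L_p^\ast(f,K)=\cL_f^\ast(\cL_f^\ast)^\iota$ from Definition~\ref{Def-L_function}.

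\textbf{Step 1: invariants of $L_p^\ast(f,K)$ in terms of $\cL_f^\ast$.} I would first show that
\[
\mu(L_p^\ast(f,K))=2\mu(\cL_f^\ast), \qquad \lambda(L_p^\ast(f,K))=2\lambda(\cL_f^\ast).
\]
By Remark~\ref{rmk:1}, $\cL_f^\ast$ and $(\cL_f^\ast)^\iota$ have the same invariants. In $\Lambda\cong\cO[[T]]$, Weierstrass preparation lets us write $\cL_f^\ast=\varpi^{\mu}u\,P$ with $u$ a unit and $P$ distinguished of degree $\lambda$. The $\mu$- and $\lambda$-invariants are therefore additive on products of nonzero elements of $\Lambda$. (The reduction mod $\varpi$ of $\varpi^{-\mu}$ times the product has $T$-adic order equal to the sum of the orders, since $\mathbb{F}[[T]]$ is a domain.) It follows that the invariants of $L_p^\ast$ are exactly twice those of $\cL_f^\ast$.

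\textbf{Step 2: transfer the hypothesis.} The assumption $\mu(L_p^\sharp)=\mu(L_p^\flat)\neq\infty$ means both $L_p^\sharp$ and $L_p^\flat$ are nonzero, so $\cL_f^\sharp$ and $\cL_f^\flat$ are nonzero. By Step~1 this hypothesis becomes $\mu(\cL_f^\sharp)=\mu(\cL_f^\flat)\ne\infty$, which is exactly the hypothesis of Proposition~\ref{thm-ap_non_0}.

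\textbf{Step 3: conclude.} Proposition~\ref{thm-ap_non_0} gives, for $n\gg0$, $\mu(\cL_{f,n})=\mu(\cL_f^\ast)$ and $\lambda(\cL_{f,n})=\lambda(\cL_f^\ast)+q_n$, with $\ast$ determined by the parity of $n$ as in the statement. Doubling and substituting Step~1 yields
\[
\mu(L_p^\ast(f,K))=2\mu(\cL_{f,n}), \qquad 2\lambda(\cL_{f,n})=2q_n+\lambda(L_p^\ast(f,K)).
\]

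\textbf{Expected obstacle.} The delicate point is the use of Proposition~\ref{thm-ap_non_0} when $\mu(\cL_f^\ast)>0$ and $a_p\ne0$. There the comparison is only modulo $(\omega_n,\varpi)$, and Lemma~\ref{lm:3.5} requires $\mu(P)=0$. To handle this, I would first divide $\cL_f^\sharp$ and $\cL_f^\flat$ by $\varpi^{\mu}$, which is legitimate since the two $\mu$-invariants are equal. I would then apply the congruence of Lemma~\ref{lm:Hfn} modulo $\varpi^{\mu+1}$ to the vector $(\cL_f^\sharp,\cL_f^\flat)$. Because $a_p\equiv0\bmod\varpi$, the off-pattern entries of $H_{f,n}$ are divisible by $\varpi$, so they contribute only terms of $\mu$-invariant at least $\mu+1$. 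Hence the leading term modulo $\varpi^{\mu+1}$ is $\pm\omega_n^{\mp}\cL_f^\ast$, and Lemma~\ref{lm:1} together with Lemma~\ref{lm:3.5} (using $\lambda(\cL_f^\ast)<p^n-q_n$ for large $n$) pins down both invariants. If that bookkeeping works out, the theorem follows formally from Steps~1--3.
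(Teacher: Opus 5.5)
Your proposal is correct and follows essentially the same route as the paper. The paper likewise doubles the invariants of $\cL_f^\ast$, using the involution invariance of Remark~\ref{rmk:1} and multiplicativity in $\Lambda$, and then applies Proposition~\ref{main-thm} and Proposition~\ref{thm-ap_non_0}. Your extra step is a worthwhile addition: you factor $\varpi^{\mu}$ out of both $\cL_f^\sharp$ and $\cL_f^\flat$ (which is exactly where the equality of their $\mu$-invariants is used) before comparing modulo $(\varpi,\omega_n)$, whereas the paper's appeal to Lemma~\ref{lm:3.5} in Proposition~\ref{thm-ap_non_0} tacitly assumes $\mu(\cL_f^\ast)=0$.
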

\begin{proof}
   By Remark \ref{rmk:1}, we have that the Iwasawa invariants remain unchanged under the involution map $\iota$. Therefore from \cite[Lemma~2.2]{gajek}, we have that
    \[\mu(L_p^\ast(f, \Qp))=2\mu^\ast(f),\]
    for $\ast\in\set{\sharp, \flat}$. 
    
For $\lambda$-invariants, note that $\lambda(L_p^\ast(f, K))=2\lambda(\cL_f^\ast)$. It follows from Lemma \ref{l3} together with  Proposition \ref{main-thm},  Proposition \ref{main-thm-2} that,  $\lambda(L_p^\ast(f, K))=2\lambda(\cL_f^\ast)=2(\lambda(\cL_{f, n})-q_n)$ for sufficiently large $n$. This completes the proof. 
\end{proof}

\subsection{Inert case}\label{inert_sec}
In this section, we assume that $p\ge5$ and $p$ is inert in $K$. Let $E/\Q$ be an elliptic curve and $f:=f_E\in S_2(\Gamma_0(N))$ be the corresponding modular form. Assume that $f$ is supersingular at $p$, i.e. $a_p(f)=0$. In this case, we follow the setup of \cite[\S~4.3]{BLV}. We denote $\Lambda$ for the Iwasawa algebra $\Zp[[T]]$ and $\Lambda_n=\Lambda/(\omega_n)=\Zp[G_n]$, where $\omega_n$ is as before. Define
\begin{enumerate}
    \item $\tilde{\omega}_0^+=\tilde{\omega_1}^+=1.$
    \item $\tilde{\omega}_0^-=\gamma-1$, where $\gamma$ is a topological generator of $G_\infty$. 
    \item For $n\ge 2, $ 
    $$\tilde{\omega}_n^+=\prod_{1\le j\le n \text{ even}} \Phi_j(\gamma),$$
    
    \item For $n\ge 1,$ $$\tilde{\omega}_n^-=\prod_{1\le j\le n \text{ odd}} \Phi_j(\gamma).$$
\end{enumerate}
Note that $\omega_n^+=\tilde{\omega}_n^+$ and $\omega_n^-=(\gamma-1)\tilde{\omega}_n^-$.   Set $\Lambda_{n}^\pm:=\dfrac{\Lambda}{(\omega_n^\pm)}$. 

 \begin{lemma}\label{lm:1:inert}
For all $n\ge1$, we have $\mu(\tilde{\omega}_n^{\pm})=0$, $\lambda({\omega}_n^+)=q_n$ if $n$ is odd and $\lambda({\omega}_n^-)=q_n+1$ if $n$ is even. 
\end{lemma}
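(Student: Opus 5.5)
The plan is to compute everything inside $\Lambda\cong\cO[[T]]$ with $\gamma=1+T$, using the power-series definitions of $\mu$ and $\lambda$ from \S\ref{Iwasawa-invar}. For power series, $\mu$ and $\lambda$ are additive under multiplication: reducing modulo $\varpi$, a product of series with $\mu=0$ has leading $T$-adic order equal to the sum of the orders, because $\mathbb{F}[[T]]$ is a domain. So it suffices to compute the invariants of each factor $\Phi_j(\gamma)$ and of $\gamma-1$, and then add them up.

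First, for $j\ge1$ I will show that $\Phi_j(1+T)$ is a distinguished polynomial of degree $p^j-p^{j-1}$. This is the same computation as in Lemma \ref{xi}. We have
\[
\Phi_j(1+T)=\frac{(1+T)^{p^j}-1}{(1+T)^{p^{j-1}}-1},
\]
and $(1+T)^{p^k}\equiv 1+T^{p^k}\pmod p$. Hence $\Phi_j(1+T)\equiv T^{p^j}/T^{p^{j-1}}=T^{p^j-p^{j-1}}\pmod{p}$. The polynomial is monic of degree $p^j-p^{j-1}$, and all of its lower coefficients are divisible by $p$, hence by $\varpi$. Its constant term is $\Phi_j(1)=p\neq0$, which is consistent with this. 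Therefore $\mu(\Phi_j(\gamma))=0$ and $\lambda(\Phi_j(\gamma))=p^j-p^{j-1}$. Trivially, $\mu(\gamma-1)=\mu(T)=0$ and $\lambda(\gamma-1)=1$.

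Second, I multiply these together. Since every factor has $\mu=0$, additivity gives $\mu(\tilde\omega_n^{\pm})=0$ for all $n\ge1$; the empty product $1$ also has $\mu=0$. For $n$ odd, the even indices $j\le n$ are $2,4,\dots,n-1$, so
\[
\lambda(\omega_n^+)=\sum_{i=1}^{(n-1)/2}\left(p^{2i}-p^{2i-1}\right)=p^{n-1}-p^{n-2}+\cdots+p^2-p=q_n.
\]
This includes $n=1$, where the sum is empty and equals $q_1=0$. For $n$ even, the odd indices $j\le n$ are $1,3,\dots,n-1$, so
\[
\lambda(\tilde\omega_n^-)=\sum_{i=1}^{n/2}\left(p^{2i-1}-p^{2i-2}\right)=p^{n-1}-p^{n-2}+\cdots+p-1=q_n.
\]
Since $\omega_n^-=(\gamma-1)\tilde\omega_n^-$, adding $\lambda(\gamma-1)=1$ gives $\lambda(\omega_n^-)=q_n+1$.

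There is no real obstacle here; the only point needing care is bookkeeping. One must check that the definitions of $q_n$ in the even and odd cases match the index parities, and distinguish $\omega_n^-$ from $\tilde\omega_n^-$, since the extra factor $\gamma-1$ is exactly the source of the $+1$. If one instead wants these statements in the finite-level algebras $\Lambda_m$, then Lemma \ref{l3} transfers the invariants for $m\gg0$, as in Lemma \ref{lm:1}.
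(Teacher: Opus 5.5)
Your proposal is correct and takes the same approach as the paper. The paper's proof just defers to the cyclotomic-polynomial computation of \cite[Lemma~3.7]{gajek}, as in Lemma~\ref{lm:1}, and adds the contribution $\mu(\gamma-1)=0$, $\lambda(\gamma-1)=1$ of the extra factor in $\omega_n^-=(\gamma-1)\tilde{\omega}_n^-$; you write out exactly these steps, including the parity bookkeeping for $q_n$.
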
  
\begin{proof} The proof is similar to the  Lemma \ref{lm:1} and  \cite[Lemma~3.7]{gajek} using the fact that  $\mu(\gamma-1)=0$ and $\lambda(\gamma-1)=1$. 
\end{proof}
Recall that the projection map $\pi_{n+1}^n:\Zp[G_{n+1}]\longrightarrow \Zp[G_n]$ is defined by $\pi_{n+1}^n(F_{n+1}):= F_{n+1}\mod \omega_n$.   
\begin{lemma}\label{lm:intert}
  Let $\epsilon=(-1)^n$, then $\omega_n^\epsilon\cL_{f, n}\in \omega_n\cdot\Lambda$ for all integers $n\ge 0$. 
  \end{lemma}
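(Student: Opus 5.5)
The plan is to read the statement inside $\Lambda_n=\Lambda/(\omega_n)$. It then says that $\omega_n^{\epsilon}\cL_{f,n}=0$ in $\Lambda_n$, and I will prove this by induction on $n$ in steps of two. The engine is the three-term relation \eqref{three-term} with $a_p(f)=0$:
\[
\pi_n^{n+1}(\cL_{f,n+1})=-\xi_{n-1}(\cL_{f,n-1}).
\]

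\textbf{Step 1: rewrite the relation in $\Lambda$.} As in the proof of Lemma \ref{xi}, the norm map $\Lambda_{n-1}\to\Lambda_n$ is multiplication of any lift by $\omega_n/\omega_{n-1}=\Phi_n(\gamma)$. So, choosing lifts $\cL_{f,m}\in\Lambda$, the relation becomes
\[
\cL_{f,n+1}\equiv -\Phi_n\,\cL_{f,n-1}\pmod{\omega_n}.
\]

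\textbf{Step 2: reformulate the claim as a divisibility.} $\Lambda$ is a UFD and the $\Phi_j$ are pairwise distinct irreducibles. Moreover $\omega_n=\omega_n^+\omega_n^-$ (with the convention $\omega_n^-=(\gamma-1)\tilde\omega_n^-$ of this section). Hence $\omega_n^{\epsilon}x\in\omega_n\Lambda$ if and only if $\omega_n^{-\epsilon}\mid x$ in $\Lambda$. This condition depends only on $x$ modulo $\omega_n$, because $\omega_n^{-\epsilon}\mid\omega_n$. So the lemma is equivalent to
\[
\cL_{f,n}\in\omega_n^{-\epsilon_n}\Lambda+\omega_n\Lambda, \qquad \epsilon_n=(-1)^n .
\]
One parity observation is needed. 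The factors of $\omega_{n+1}^{\epsilon_n}$ are the $\Phi_j$ with $j\le n+1$ and $j\equiv n \pmod 2$, together with $(\gamma-1)$ in the minus case. These are exactly the factors of $\omega_n^{\epsilon_n}$, so $\omega_{n+1}^{\epsilon_n}=\omega_n^{\epsilon_n}$, which divides $\omega_n$.

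\textbf{Step 3: the inductive step.} Assume the claim at level $n-1$, i.e. $\omega_{n-1}^{\epsilon_n}\mid\cL_{f,n-1}$ modulo $\omega_{n-1}$; note $-\epsilon_{n-1}=\epsilon_n$. Multiply by $\Phi_n$. Since $n$ has the parity of $\epsilon_n$, we have $\Phi_n\omega_{n-1}^{\epsilon_n}=\omega_n^{\epsilon_n}$ and $\Phi_n\omega_{n-1}=\omega_n$. Therefore $\Phi_n\cL_{f,n-1}$ lies in $\omega_n^{\epsilon_n}\Lambda+\omega_n\Lambda=\omega_n^{\epsilon_n}\Lambda$.

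By Step 1, $\cL_{f,n+1}\equiv-\Phi_n\cL_{f,n-1}\pmod{\omega_n}$, so $\omega_n^{\epsilon_n}$ divides $\cL_{f,n+1}$ modulo $\omega_n$. Since $\omega_n^{\epsilon_n}\mid\omega_n$, it divides $\cL_{f,n+1}$ outright. By Step 2 we have $\omega_n^{\epsilon_n}=\omega_{n+1}^{\epsilon_n}=\omega_{n+1}^{-\epsilon_{n+1}}$, which is exactly the claim at level $n+1$.

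\textbf{Step 4: base cases (the expected obstacle).} The cases $n=0,1$ must be checked directly, and I expect this to be the delicate point. For $n=1$ the claim reduces to $\omega_0=(\gamma-1)\mid\cL_{f,1}$ modulo $\omega_1$, i.e. that the augmentation of $\cL_{f,1}$ vanishes. I would get this from the three-term relation at $n=0$ in the inert case, where $\Pi_\infty=U_0$ is compact. Following \cite[Lemma~2.6]{Dar-Iov08}, the degree of $\cL_{f,1}$ equals $a_p\cL_{f,0}$ minus a boundary term, which vanishes by harmonicity of $h$ at $v_0$ together with $a_p=0$. For $n=0$ the statement involves $\omega_0^{+}=1$, so it is the assertion $\cL_{f,0}\equiv 0$ in $\Lambda_0=\cO$. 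I would deduce this from the same harmonicity, summing $h$ over the $p+1$ edges at $v_0$, which are permuted transitively by $U_0$. If that fails, I would instead fix the level-$0$ convention as in \cite[\S~4.3]{BLV}.
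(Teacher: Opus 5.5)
Your Steps 1--3 are correct and follow the paper's proof. The induction runs in steps of two using $\cL_{f,n+1}\equiv-\Phi_n\cL_{f,n-1}\pmod{\omega_n}$, and your divisibility reformulation makes the bookkeeping cleaner. You are also right that the even chain needs the case $n=0$, i.e.\ $\cL_{f,0}=0$ in $\cO$, which the paper's proof passes over. The gap is in Step~4. With the convention you fixed in Step~2, $\omega_1^-=(\gamma-1)\Phi_1=\omega_1$, equivalently $\omega_1^{-\epsilon_1}=\omega_1^+=1$. So the case $n=1$ is vacuous, and this is exactly how the paper disposes of it. Your reduction to ``$(\gamma-1)\mid\cL_{f,1}$'' silently uses $\omega_1^-=\Phi_1$, which contradicts Step~2. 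The statement you then set out to prove is false in general.

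The degree computation you invoke only shows that the theta element built from the conductor-$p$ Gross points has degree zero: $\deg\widetilde{\cL}_{f,1}=a_p\deg\widetilde{\cL}_{f,0}=0$. (The input here is really the $T_p$-relation $\sum_{w\sim v_0}h(w)=a_p h(v_0)$ for the vertex function $h$, not harmonicity.) This fact cannot serve for both $\cL_{f,0}$ and $\cL_{f,1}$; under any fixed indexing it proves exactly one of them. Since $G_n=\widetilde G_{n+1}/\Delta$, the coherent normalization is that $\cL_{f,n}$ is the image of $\widetilde{\cL}_{f,n+1}$. Then $\cL_{f,0}=\deg\widetilde{\cL}_{f,1}=0$, which is the base case you actually need. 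However, the relation at level one gives $\mathrm{aug}(\cL_{f,1})=\deg\widetilde{\cL}_{f,2}=a_p\deg\widetilde{\cL}_{f,1}-(p+1)\deg\widetilde{\cL}_{f,0}=-(p+1)\deg\widetilde{\cL}_{f,0}$. Its square is, by Gross's formula, a nonzero multiple of $L(f/K,1)$, so your ``boundary term'' does not vanish in general. With the unshifted indexing the augmentation of $\cL_{f,1}$ would vanish, but then $\cL_{f,0}=\deg\widetilde{\cL}_{f,0}\neq0$ and the lemma itself fails at $n=0$. The repair is to delete your $n=1$ argument, replace it by the observation $\omega_1^+=1$, and apply your $n=0$ argument to $\widetilde{\cL}_{f,1}$.
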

  \begin{proof}
      The proof is similar to \cite[Lemma~4.3]{BLV}.

      We prove this by induction on $n$. For $n=1$, the result is trivially true because $\omega_1^-=\omega_1$. Suppose the result is true for all integers $t$ with $0\le t\le n$ and then we prove it for $n+1$.
      
      Let $\gamma$ be a generator of $G_n$, and let its image in $\Zp[G_n]$ also be denoted by $\gamma$. By a slight abuse of notation, we shall use the same symbol $\gamma$ to denote any lift of $\gamma$ to $\Zp[G_{n+1}]$. From the three-term relation and the definition of the projection map, we have the following:
      \[
      \cL_{f,n+1}\equiv -\xi_n\cL_{f, n-1}\mod\omega_n. 
      \]
Now for $n=2m+2$, we have that 
\begin{align*}
\omega_{2m+2}^{+}\cdot \mathcal{L}_{f,2m+2}
&=
\omega_{2m+2}^{+}\cdot
\left(
-\xi_{2m+1}\cdot \mathcal{L}_{f,2m}
+\omega_{2m+1}\cdot \alpha
\right)
\qquad
(\text{for some }\alpha\in\Lambda_{2m+2})
\\
&=
-\xi_{2m+2}(\gamma)\cdot \xi_{2m+1}(\gamma)\cdot
\omega_{2m}^{+}\cdot \mathcal{L}_{f,2m}
+
\omega_{2m+2}^{+}\cdot \omega_{2m+1}\cdot \alpha
\\
&\in
\underbrace{
-\xi_{2m+2}(\gamma)\cdot
\xi_{2m+1}(\gamma)\cdot
\omega_{2m}\cdot
\Lambda_{2m+2}
}_{=\omega_{2m+2}}
+
\underbrace{
\omega_{2m+2}^{+}\cdot
\omega_{2m+1}\cdot
\Lambda_{2m+2}
}_{\text{divisible by }\omega_{2m+2}}.
\end{align*}   

Note that the last inequality follows from the induction hypothesis as
$\omega_{2m+2}^{+}\omega_{2m+1}$ is divisible by $\omega_{2m+2}$,
since $\omega_{2m+2}^{-}=\omega_{2m-1}^{-}$. For $n=2m+1$, a similar argument gives the desired result. 
\end{proof}

In the view of \cite[Lemma~4.4]{BLV}, 
 multiplication by $\omega_n^+$ is an isomorphism from $\Lambda_n^-$ to $\omega_n^-$-torsion submodule of $\Lambda_n$, and multiplication by $\omega_n^-=(\gamma-1)\tilde{\omega}^-_n$ is an isomorphism from $\Lambda^+_n$ to $\omega^+_n=\tilde{\omega}^+_n$-torsion submodule of $\Lambda_n$.  From the above observation, there exist $\cL_{f,n}^\pm\in \Lambda^{\pm}_{n}$ such that
for $n\geq 1$, 
    \begin{equation}\label{non-exp}
    \cL_{f, n}=\begin{cases}(-1)^{n/2}{\omega}^-_n\cL_{f,n}^+ & \text{if } n \text{ is even}, \\
(-1)^{(n-1)/2}\tilde{\omega}^+_{n}\cL_{f, n}^- & \text{if } n \text{ is odd}.
\end{cases}
    \end{equation}

Denote by $\pi^+_{2m+2} : \Lambda^{+}_{2m+2} \to \Lambda^{+}_{2m}$ and $\pi^{-}_{2m+3} : \Lambda^{-}_{2m+3} \to \Lambda^{-}_{2m+1}$ the natural projection maps. Then, from \cite[Lemma~4.5]{BLV}, it follows that $\cL_{f, 2m}^+$ (resp. $\cL_{f, 2m+1}^-$) forms a compatible system with respect to the projection maps $\pi^{+}_{2m}$ (resp. $\pi^{-}_{2m+1}$) for every $m\ge1$. Note that $\varprojlim \Lambda^+_{2m}\cong\Lambda\cong\varprojlim \Lambda^-_{2m+1}$, then one can define 

\[\cL_f^{\pm}=\underset{n\in\N^\pm}{\varprojlim}\cL^\pm_{f, n}\in \Lambda,  \]
where $\N^+$ (resp. $\N^-$) be the set of even (resp. odd) positive integers.
The $p$-adic $L$-function is defined as follows:
\[L_p^\pm(f, K):=\cL_f^\pm(\cL_f^\pm)^\iota.\]
\begin{lemma}\label{lm:inert}
For $n\ge 1$, we have  
\begin{equation*}
    \cL_{f, n}\equiv\begin{cases}(-1)^{n/2}{\omega}^-_n\cL_{f}^+\mod{\omega_n} & \text{if } n \text{ is even}, \\
(-1)^{(n-1)/2}\tilde{\omega}^+_{n}\cL_{f}^- \mod{\omega_n}& \text{if } n \text{ is odd}.
\end{cases}
    \end{equation*}
\end{lemma}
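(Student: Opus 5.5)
We deduce Lemma \ref{lm:inert} from the finite-level factorization \eqref{non-exp} together with the compatibility of the elements $\cL^\pm_{f,n}$. The key point is that the image of $\cL_f^\pm$ in $\Lambda_n^\pm$ is $\cL^\pm_{f,n}$. Multiplying by $\omega_n^\mp$ (or $\tilde\omega_n^+$) then gives a congruence modulo $\omega_n$, because $\omega_n^+\omega_n^-=\omega_n$.

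\textbf{Even $n$.} Write $n=2m$ and note that $\cL_f^+=\varprojlim_{m}\cL^+_{f,2m}$ under the isomorphism $\Lambda\cong\varprojlim\Lambda^+_{2m}$. Hence the image of $\cL_f^+$ in $\Lambda^+_{2m}=\Lambda/(\omega^+_{2m})$ is $\cL^+_{f,2m}$, so we may write $\cL_f^+=\widetilde{\cL}^+_{f,2m}+\omega^+_{2m}\,y$ with $y\in\Lambda$, where $\widetilde{\cL}^+_{f,2m}$ is any lift of $\cL^+_{f,2m}$. Multiplying by $\omega^-_{2m}$ gives
\[
\omega^-_{2m}\cL_f^+=\omega^-_{2m}\widetilde{\cL}^+_{f,2m}+\omega_{2m}\,y .
\]
Reducing modulo $\omega_{2m}$ and comparing with \eqref{non-exp} yields $\cL_{f,2m}\equiv(-1)^m\omega^-_{2m}\cL_f^+\pmod{\omega_{2m}}$. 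Well-definedness is automatic: the class of $\omega^-_{2m}\widetilde{\cL}^+_{f,2m}$ in $\Lambda_{2m}$ does not depend on the choice of lift, since changing the lift by a multiple of $\omega^+_{2m}$ changes the product by a multiple of $\omega_{2m}$. This is exactly how the multiplication map $\Lambda^+_n\to\Lambda_n$ from \cite[Lemma~4.4]{BLV} is defined.

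\textbf{Odd $n$.} Write $n=2m+1$ and run the same argument. The image of $\cL_f^-$ in $\Lambda^-_{2m+1}$ is $\cL^-_{f,2m+1}$, so $\cL_f^-=\widetilde{\cL}^-_{f,2m+1}+\omega^-_{2m+1}z$ with $z\in\Lambda$. Multiplying by $\tilde\omega^+_{2m+1}=\omega^+_{2m+1}$ kills the error term modulo $\omega_{2m+1}=\omega^+_{2m+1}\omega^-_{2m+1}$. Combining with \eqref{non-exp} gives the stated congruence.

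\textbf{Main obstacle.} The only real difficulty is bookkeeping with the tilde conventions. The $\Lambda^-_n$ are defined using $\omega_n^-=(\gamma-1)\tilde\omega_n^-$, whereas the factor $\tilde\omega^+_n=\omega^+_n$ appears in the odd case, so we must check that $\omega^+_n\omega^-_n=\omega_n$ holds with precisely these normalizations. The identity $\prod_{j=1}^{n}\Phi_j\cdot(\gamma-1)=\gamma^{p^n}-1$ settles this. We must also check that the compatible systems in \cite[Lemma~4.5]{BLV} are taken along the projections $\Lambda^\pm_{n+2}\to\Lambda^\pm_n$, which is legitimate because $\omega_n^\pm\mid\omega_{n+2}^\pm$. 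With that verified, the lemma is a direct consequence of \eqref{non-exp}.
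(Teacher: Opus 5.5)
Your proposal is correct and follows the paper's own proof. The paper also uses $\cL_f^\pm\equiv\cL_{f,n}^\pm \bmod \omega_n^\pm$ (for $n$ of the matching parity), multiplies by $\omega_n^\mp$ via the isomorphisms of \cite[Lemma~4.4]{BLV} to get $\omega_n^\mp\cL_f^\pm\equiv\omega_n^\mp\cL^\pm_{f,n}\bmod\omega_n$, and concludes from \eqref{non-exp}; you simply make explicit the choice of lift and the identity $\omega_n^+\omega_n^-=\omega_n$ under the inert-section normalization $\omega_n^-=(\gamma-1)\tilde\omega_n^-$, which the paper leaves implicit.
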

\begin{proof}
    From the definition of $\cL_f^\pm$, it follows that $\cL_f^\pm\equiv \cL_{f, n}^\pm\mod{\omega_{n}^\pm}$.
    Recall that multiplication by $\tilde{\omega}_n^{+}$ (resp. $\omega_n^{-}=(\gamma-1)\widetilde{\omega}_n^{-}$) induces an isomorphism
\[
\Lambda_{n}^{-}\xrightarrow{\sim}\Lambda_{n}[\omega_n^{-}]
\quad
\left(\text{resp. } \Lambda_{n}^{+}\xrightarrow{\sim}\Lambda_{n}[\omega_n^{+}]
=\Lambda_{n}[\widetilde{\omega}_n^{+}]\right).
\]
This gives us  $\omega_n^\mp\cL_f^\pm\equiv \omega_n^\mp \cL_{f, n}^\pm \mod{\omega_n}$. Using this observation and \eqref{non-exp}, the lemma follows. 
\end{proof}

Now we have the following theorem in the inert case. 
\begin{theorem}\label{main_thm_inert}
Assume the hypotheses (\ref{disc}) and (\ref{def}).  Let $f\in S_2(\Gamma_0(N))$ be a newform with $a_p(f)=0$. Assume that $\mu(L_p^+(f, K_\infty))=\mu(L_p^-(f, K_\infty))<\infty$.  Then for sufficiently large $n$, we have the following: 
\[\mu(L_p^\pm(f, K))=2\mu(\mathcal{L}_{f,n})
 \text{ and }
2\lambda(\mathcal{L}_{f,n})=\begin{cases}
  2(q_n+1)+\lambda(L_p^+(f,K)) & \text{if $n$ is even}, \\
  2q_n+\lambda(L_p^-(f,K))& \text{if $n$ is odd}. 
\end{cases}
\]

 \end{theorem}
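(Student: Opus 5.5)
The plan mirrors the split case (Proposition~\ref{main-thm} and Theorem~\ref{main-thm-2}). Lemma~\ref{lm:inert} replaces Lemma~\ref{lm:2}, and Lemma~\ref{lm:1:inert} replaces Lemma~\ref{lm:1}. First we reduce to invariants of $\cL_f^\pm$. By Remark~\ref{rmk:1}, $\iota$ preserves Iwasawa invariants. By Lemma~\ref{l1}, applied in $\Lambda$ where $\mu$ is additive and $\lambda$ is additive (Weierstrass preparation), we get
\[
\mu(L_p^\pm(f,K))=2\mu(\cL_f^\pm),\qquad \lambda(L_p^\pm(f,K))=2\lambda(\cL_f^\pm).
\]
The hypothesis $\mu(L_p^+)=\mu(L_p^-)<\infty$ then gives $\mu(\cL_f^+)=\mu(\cL_f^-)<\infty$. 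In particular both elements are nonzero and have finite $\lambda$-invariants. It therefore suffices to show, for $n\gg0$, that $\mu(\cL_{f,n})=\mu(\cL_f^\pm)$, and that
\[
\lambda(\cL_{f,n})=\begin{cases} q_n+1+\lambda(\cL_f^+) & n \text{ even},\\ q_n+\lambda(\cL_f^-) & n \text{ odd}.\end{cases}
\]

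Take $n$ even. Lemma~\ref{lm:inert} gives $\cL_{f,n}\equiv \pm\,\omega_n^-\cL_f^+ \pmod{\omega_n}$, so $\cL_{f,n}$ is the image in $\Lambda_n$ of $P_n:=\omega_n^-\cL_f^+\in\Lambda$. In $\Lambda$, $\mu(P_n)=\mu(\cL_f^+)$ because $\mu(\omega_n^-)=0$. Also $\lambda(P_n)=\lambda(\omega_n^-)+\lambda(\cL_f^+)=q_n+1+\lambda(\cL_f^+)$ by Lemma~\ref{lm:1:inert}, since $\omega_n^-=(\gamma-1)\tilde\omega_n^-$. For odd $n$ we use $\tilde\omega_n^+$ instead and get $q_n+\lambda(\cL_f^-)$. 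Write $\cL_f^\pm=\varpi^{\mu}u$ with $\mu(u)=0$. Dividing by $\varpi^\mu$ reduces everything to the case $\mu=0$, and then we compare invariants of $P_n$ with those of its image mod $\omega_n$. The tool is Lemma~\ref{lm:3.5}, or directly the observation in the proof of Lemma~\ref{l3}: mod $\varpi$, $\omega_n\equiv T^{p^n}$. Hence if $\lambda(P_n)<p^n$, the reduction of $P_n$ in $\mathbb{F}[G_n]\cong\mathbb{F}[T]/T^{p^n}$ is nonzero. So $\mu$ of the image is $0$, and its $I_n$-adic order equals $\lambda(P_n)$.

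The main obstacle is that $P_n$ varies with $n$, so Lemma~\ref{l3} cannot be applied to a fixed element. Instead we need the inequality $\lambda(P_n)<p^n$ uniformly. This holds because $q_n+1+\lambda(\cL_f^\pm)<p^n$ for $n\gg0$: $q_n\le p^{n-1}$ and $\lambda(\cL_f^\pm)$ is a fixed finite number. This is the same step as in the proof of Proposition~\ref{main-thm}, where $p^n-q_n\to\infty$. We must also check that the sign $(-1)^{n/2}$ and the ambiguity in choosing lifts mod $\omega_n$ do not matter. This is clear because units and changes by multiples of $\omega_n$ do not affect invariants in $\Lambda_n$.

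Combining the two steps, $2\mu(\cL_{f,n})=2\mu(\cL_f^\pm)=\mu(L_p^\pm(f,K))$. Doubling the $\lambda$ formula gives the stated expressions: $2(q_n+1)+\lambda(L_p^+(f,K))$ for even $n$ and $2q_n+\lambda(L_p^-(f,K))$ for odd $n$.
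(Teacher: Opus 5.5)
Your proposal is correct and follows the paper's route. Like the paper, you use Lemma~\ref{lm:inert} to realize $\cL_{f,n}$ as the image in $\Lambda_n$ of $\pm\omega_n^-\cL_f^+$ (for $n$ even) or $\pm\tilde\omega_n^+\cL_f^-$ (for $n$ odd), take the $\lambda$-invariants of these factors from Lemma~\ref{lm:1:inert}, and pass to $\Lambda_n$ using a bound $\lambda<p^n$ as in Proposition~\ref{main-thm}. Two points in your write-up are actually more careful than the paper's own proof. First, you handle the $n$-dependence of $P_n$ directly via $\omega_n\equiv T^{p^n}\bmod\varpi$, which Lemma~\ref{l3} does not cover as stated. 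Second, you justify $\lambda(L_p^\pm(f,K))=2\lambda(\cL_f^\pm)$ by additivity in $\Lambda$ together with Remark~\ref{rmk:1}, which also yields the correct sign in the final $\lambda$-formula.
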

\begin{proof} 
    Note that for odd $n$, the proof is identical to that of  the split case (see Theorem \ref{main-thm-2}). Suppose $n$ is even, then Lemma \ref{l3} and Lemma \ref{lm:inert} gives us that $\mu(\lambda(L_p^+(f,K)))=2\mu(\cL_f^+)=2\mu(\cL_{f, n})$ for sufficiently large $n$. Using an argument similar to Proposition \ref{main-thm}, we compute the $\lambda$-invariants.  Note that,  Lemma \ref{l3} gives us that $\lambda(L_p^+(f,K))=2\lambda(\cL_f^+)$. It follows from Lemma \ref{lm:1:inert} together with Lemma \ref{lm:inert} that $\lambda(L_p^+(f,K))=2(q_n+1)-\lambda(\cL_{f, n})$, for sufficiently large $n$. This completes the proof. 
    
\end{proof}

\bibliographystyle{plain}
\bibliography{references}

\end{document}